\theoremstyle{plain}
\newtheorem{corollary}{Corollary}[section]
\newtheorem{definition}[corollary]{Definition}
\newtheorem{definitions}[corollary]{Definitions}
\newtheorem{lemma}[corollary]{Lemma}
\newtheorem{prop}[corollary]{Proposition}
\newtheorem{rem}[corollary]{Remark}
\newtheorem{thm}[corollary]{Theorem}
\newfont{\sBlackboard}{msbm10 scaled 900}
\newcommand{\mylabel}[1]{\label{#1}
    \ifx\undefined\stillediting
    \else \fbox{$#1$}\fi }
\newcommand{\BE}{\begin{equation}}
\newcommand{\EEQ}{\end{equation}}
\newcommand{\rfb}[1]{\mbox{\rm
        (\ref{#1})}\ifx\undefined\stillediting\else:\fbox{$#1$}\fi}
\newfont{\Blackboard}{msbm10 scaled 1200}
\newfont{\roma}{cmr10 scaled 1200}
\newcommand{\bb}{\begin{equation}}
\newcommand{\bbb}{\end{equation}}
\newcommand{\mm}    {{\hbox{\hskip 0.5pt}}}
\newcommand{\bluff} {{\hbox{\raise 15pt \hbox{\mm}}}}
\def\section{\@startsection {section}{1}{\z@}{-3.5ex plus -1ex minus
        -.2ex}{2.3ex plus .2ex}{\large\bf}}
\begin{document}

\title{\sc Orlicz-Sobolev versus H\"older local minimizers for nonlinear Robin problems}
\author{Anouar Bahrouni, Hlel Missaoui, Hichem Ounaies and Vicen\c{t}iu
D.\,R\u{a}dulescu} \maketitle
\begin{abstract}
We establish regularity results for weak solutions of Robin problems driven by the well-known Orlicz $g$-Laplacian operator given by
\begin{equation}\label{P1}
\left\lbrace
\begin{array}{ll}
 -\Delta _g u=f(x,u),& x\in\Omega\\
\displaystyle a(\vert \nabla u\vert)\frac{\partial u}{d\nu}+b(x)\vert u\vert^{p-2}u=0,& x\in \partial \Omega,
\end{array}
 \right.\tag{P}
\end{equation}
where $\Delta _g u:=\text{div}(a(\vert\nabla u\vert)\nabla u)$, $\Omega\subset \mathbb{R}^N,\ N\geq 3$, is a bounded domain with $C^2$-boundary $\partial\Omega$, $\frac{\partial u}{d\nu}=\nabla u \cdot\nu$,
 $\nu$ is the unit exterior vector on $\partial\Omega$, $p>0$, $b \in C^{1,\theta}(\partial\Omega)$ with $\theta\in(0,1)$ and $\inf_{x\in \partial\Omega} b(x) > 0$. Specifically, using a suitable variation of the Moser iteration technique, we prove that every weak solution of the problem $(\ref{P})$ is bounded. Moreover, we combine this result with the Lieberman regularity theorem, to show that every $C^1(\overline{\Omega})$-local minimizer is also a $W^{1,G}(\Omega)$-local minimizer for the corresponding energy functional of problem $(\ref{P})$.
 
 \smallskip\noindent{\bf 2020 Mathematics Subject Classification:}  35J60, 35j25, 35S30, 46E35.
 
 \smallskip\noindent{\bf Keywor{\rm d}s:} Orlicz-Sobolev space,
 	Orlicz $g$-Laplacian, Robin boundary values, Moser iteration.
\end{abstract}

\section{Introduction}
In this paper, we study the boundedness regularity for a weak solution and the relationship between the H\"older local minimizer and the Orlicz-Sobolev local minimizer for the corresponding energy functional of the following Robin problem:
\begin{equation}\label{P}
\left\lbrace
\begin{array}{ll}
 -\Delta _g u=f(x,u),& \text{on}\  \Omega\\
\displaystyle a(\vert\nabla u\vert)\frac{\partial u }{d\nu}+b(x)\vert u\vert^{p-2}u=0,& \text{on}\  \partial \Omega,
\end{array}
 \right.\tag{P}
\end{equation}
where $\Omega$ is a bounded open subset of $\mathbb{R}^N$ $(N\geq3)$ with $C^2$-boundary $\partial\Omega$, $\Delta _g u:=\text{div}(a(\vert\nabla u\vert)\nabla u)$ is the Orlicz $g$-Laplacian  operator, $\frac{\partial u}{d\nu}=\nabla u .\nu$, $\nu$ is the unit exterior vector on $\partial\Omega$, $p>0$,  $b \in C^{1,\theta}(\partial\Omega)$ with $\theta\in(0,1)$ and $\inf\limits_{x\in \partial\Omega} b(x) > 0$ and the function $a(\vert t\vert)t$ is an increasing homeomorphism from $\mathbb{R}$ onto $\mathbb{R}$. In the right side of problem $(\ref{P})$ there is a Carath\'eodory function $f:\Omega\times\mathbb{R}\longrightarrow \mathbb{R}$, that is $x\longmapsto f(x,s)$ is measurable for all $s\in\mathbb{R}$ and $s\longmapsto f(x,s)$ continuous for a.e. $x\in\Omega$.\\

Due to the nature of the non-homogeneous differential operator $g$-Laplacian, we shall work in the framework of Orlicz and Orlicz-Sobolev spaces. The study of variational problems in the classical Sobolev and Orlicz-Sobolev spaces is an interesting topic of research due to its significant role in many fiel{\rm d}s of mathematics, such as approximation theory, partial differential equations, calculus of variations, non-linear potential theory, the theory of quasi-conformal mappings, non-Newtonian flui{\rm d}s, image processing, differential geometry, geometric function theory, and probability theory (see \cite{16,17,18,19,6}).\\

It is worthwhile to mention that the Orlicz-Sobolev space is a generalization of the classical Sobolev
space. Hence, several properties of the Sobolev spaces have been extended to the Orlicz-Sobolev spaces.
To the best of our knowledge, there is a lack of some regularity results concerning the problem $(\ref{P})$. Precisely, the boundedness of a weak solution and the relationship between the Orlicz-Sobolev and H\"older local minimizers for the corresponding energy functional of $(\ref{P})$. Those results are crucial in some metho{\rm d}s of the existence and multiplicity of solutions for the problem $(\ref{P})$.\\

The question of the boundedness, regularity, and the relationship between the Sobolev and H\"older local minimizers for certain $C^1$-functionals have been treated by many authors \cite{ylp,29,fan2,6,3,piral,30,lmt,31,shu,1,2,mgpw,4,pwa,patrik1,patrik2} and references therein.
In \cite{1}, G. M. Lieberman treated the regularity result up to the boundary for the weak solutions of the following problem
\begin{equation}\label{E}
\begin{array}{ll}
 -\Delta _p u=f(x,u),& x\in\Omega\\
\end{array}
 \tag{E}
\end{equation}
where $\Omega$ is a bounded domain in $\mathbb{R}^N$ with $C^{1,\alpha}$-boundary. Precisely, under some assumptions on the structure of the $p$-Laplacian operator and on the non-linear term  $f$, he proved that every bounded (i.e. $u\in L^{\infty}(\Omega)$) weak solution of the problem $(\ref{E})$ (with Dirichlet or Neumann boundary conditions) belongs to $C^{1,\beta}(\overline{\Omega})$. In \cite{2}, G. M. Lieberman, extended the results obtained in \cite{1} to the Orlicz $g$-Laplacian operator. In \cite{fan2}, X. L. Fan, established the same results gave in \cite{1} for the variable exponent Sobolev spaces ($p$ being variable). Note that all the results cited in \cite{fan2,1,2} require that the weak solution belongs to $L^{\infty}(\Omega)$. The boundedness result for weak solutions in the Dirichlet case can be deduced from Theorem $7.1$ of Ladyzhenskaya-Uraltseva \cite{lad} (problems with standard growth conditions) and Theorem $4.1$ of Fan-Zhao \cite{fan3} (problems with non-standard growth conditions). For the Neumann case, the boundedness result is deduced from Proposition 3.1 of Gasi\`nski-Papageorgiou \cite{3} (problems with sub-critical growth conditions).\\

 To the best of our knowledge, there is only one paper (see \cite{6}) devoted to the boundedness result of weak solutions to problems driven by the Orlicz $g$-Laplacian operator. Precisely, in \cite{6}, F. Fang and Z. Tan, with sub-critical growth conditions, proved that every weak solution of problems with Dirichlet boundary conditions belongs to $L^{\infty}(\Omega)$.  The approaches used by Fang and Tan in \cite{6} for the boundedness result don't work in our case (Robin boundary condition) since they require that $u_{\mid_{\partial\Omega}}$ is bounded ($u$ being the weak solution). To overcome this difficulty, we apply a suitable variation of the Moser iteration technique.\\

The question of the relationship between the Sobolev and H\"older local minimizers for certain functionals has taken the attention of many authors \cite{ylp,29,6,3,piral,30,34,31,shu,iann,mom,patrik1,patrik2} and references therein. In \cite{29}, Brezis and Nirenberg have proved a famous theorem which asserts that the local minimizers in the space $C^1$ are also local minimizers in the space $H^1$ for certain variational functionals. A result of this type was later extended to the space $W^{1,p}_0(\Omega)$ ( Dirichlet boundary condition), with $1<p<\infty$, by Garcia Azorera-Manfredi-Peral Alonso \cite{30} (see also Guo-Zhang \cite{31}, where $2\leq p$). The $W^{1,p}_n(\Omega)$-version (Neumann boundary condition) of the result can be found in Motreanu-Motreanu-Papageorgiou \cite{mom}. Moreover,
this theorem has been extended to the  $p(x)$-Laplacian equations (see \cite{3}), non-smooth functionals (see \cite{ylp,iann,patrik1}), and singular equations with critical terms (see \cite{34}).\\

 As far as we know, there is only one paper (see \cite{6}) devoted to the result of Brezis and Nirenberg in the Orlicz case. Precisely, in \cite{6}, F. Fang and Z. Tan proved a boundedness regularity result and established the relation between the $C^1(\overline{\Omega})$ and $W^{1,G}_0(\Omega)$ minimizers for an Orlicz problem with Dirichlet boundary condition. Since our problem $(\ref{P})$ is with Robin boundary condition,  many approaches  used in \cite{6} don't work.\\

 The main novelty of our work is the study of the boundedness regularity for weak solutions of problem  $(\ref{P})$ and the relationship between the Orlicz-Sobolev and H\"older local minimizers for the energy functional of problem $(\ref{P})$. The non-homogeneity of the $g$-Laplacian operator brings us several difficulties in order to get the boundedness of a weak solution to the Robin Problem $(\ref{P})$.\\

 This paper is organized as follows. In Section 2, we recall the basic properties of
the Orlicz Sobolev spaces and the Orlicz Laplacian operator, and we state the
main hypotheses on the data of our problem. Section 3 deals with two
regularity results. In the first we prove that every weak solution of problem $(\ref{P})$ belongs to $L^{s}(\Omega)$, for all $1\leq s<\infty$. In the second we show that every solution of problem $(\ref{P})$ is bounded. In the last Section, we establish the relationship between the local $C^1(\overline{\Omega})$-minimizer and the local $W^{1,G}(\Omega)$-minimizer for the corresponding energy functional.

\section{Preliminaries}

  To deal with problem $(\ref{P})$, we use the theory of Orlicz-Sobolev spaces since problem $(\ref{P})$
contains a non-homogeneous function $a(.)$ in the differential operator. Therefore, we start
with some basic concepts of Orlicz-Sobolev spaces, and we set the hypotheses on the non-linear term $f$. For more details on the Orlicz-Sobolev spaces see \cite{111,6,8,9990,5,11} and the references therein.\\

The function $a : (0,+\infty) \rightarrow (0,+\infty)$ is a function such that the mapping, defined by
$$
g(t):=
\left\lbrace
\begin{array}{lcc}
a(\vert t\vert)t, & \text{if}\ t\neq 0,\\
\ & \ \\
0 ,& \text{if}\ t= 0,
\end{array}
\right.
$$
is an odd, increasing homeomorphism from $\mathbb{R}$ onto itself. Let
$$G(t):=\int_{0}^{t}g(s)\ {\rm d}s,\ \ \forall\ t\in \mathbb{R},$$
$G$ is an $N$-function, i.e. Young function satisfying: $G$ is even, positive, continuous and  convex function. Moreover, $G(0)=0$, $\frac{G(t)}{t}\rightarrow 0$ as $t\rightarrow0$ and  $\frac{G(t)}{t}\rightarrow +\infty$ as $t\rightarrow+\infty$ (see \cite[Lemma 3.2.2, p. 128]{9990}).
\\

In order to construct an Orlicz-Sobolev space setting for problem $(\ref{P})$, we impose the following class of assumptions on $G$, $a$ and $g$:
\begin{enumerate}
\item[$(G)$]
\begin{enumerate}
\item[$(g_1)$]: $a(t)\in C^{1}(0,+\infty),\ a(t)>0$ and $a(t)$ is an increasing function for $t>0.$
\item[$(g_2)$]: $1<p<g^-:=\inf\limits_{t>0}\frac{g(t)t}{G(t)}\leq g^{+}:=\sup\limits_{t>0}\frac{g(t)t}{G(t)}< N.$
\item[$(g_3)$]: $0<g^--1=a^-:=\inf\limits_{t>0}\frac{g^{'}(t)t}{g(t)}\leq g^+-1=a^{+}:=\sup\limits_{t>0}\frac{g^{'}(t)t}{g(t)}.$
\item[$(g_4)$]: $\displaystyle{\int_1^{+\infty}\frac{G^{-1}(t)}{t^{\frac{N+1}{N}}}{\rm d}t=\infty}$ and $\displaystyle{\int_0^{1}\frac{G^{-1}(t)}{t^{\frac{N+1}{N}}}{\rm d}t<\infty}$.
\end{enumerate}
\end{enumerate}
The conjugate $N$-function of $G$, is defined by
$$\tilde{G}(t)=\int_{0}^{ t} \tilde{g}(s)\ {\rm d}s,$$
where  $\tilde{g}: \mathbb{R}\rightarrow\mathbb{R}$ is given by
$\tilde{g}(t)=\sup\{s:\ g(s)\leq t\}$. If $g$ is continuous on $\mathbb{R}$, then $\tilde{g}(t)=g^{-1}(t)$ for all $t\in \mathbb{R}.$ Moreover, we have
\begin{equation}\label{1119}
st\leq G(s)+\tilde{G}(t),
\end{equation}
which is known as the Young inequality. Equality in $(\ref{1119})$ hol{\rm d}s  if and only if either $t=g(s)$ or $s=\tilde{g}(t)$. In our case, since $g$ is continuous, we have
 $$\tilde{G}(t)=\int_{0}^{ t}g^{-1}(s)\ {\rm d}s.$$
 The functions $G$ and $\tilde{G}$ are complementary $N$-functions.\\

We say that $G$  satisfies the $\Delta _2$-condition, if there exists $C>0$, such that
\begin{equation}\label{delta}
G(2t) \leq CG(t),\ \text{for all}\  t > 0.
\end{equation}
We want to remark that assumption $(g_2)$ and $(\ref{delta})$ are equivalent (see \cite[Theorem 3.4.4, p. 138]{9990} and \cite{8}).\\

If $G_1$ and $G_2$ are two $N$-functions, we say that $G_1$ grow essentially more slowly than $G_2$  $(G_1\prec\prec G_2$ in symbols$)$, if and only if for every positive constant $k$, we have
\begin{equation}
\lim_{t\rightarrow+\infty}\frac{G_1(kt)}{G_2(t)}=0.
\end{equation}

Another important function related to the $N$-function $G,$ is the Sobolev conjugate function $G_{*}$  defined by 
$$G_{*}^{-1}(t)=\int_{0}^{t}\frac{G^{-1}(s)}{s^{\frac{N+1}{N}}}\ {\rm d}s, \ t>0$$
(see \cite[Definition 7.2.1, p. 352]{9990}).\\
If $G$ satisfies the $\Delta _2$-condition, then $G_*$ also satisfies the $\Delta _2$-condition. Namely, there exist $g^-_*=\frac{Ng^-}{N-g^-}$ and $g^+_*=\frac{Ng^+}{N-g^+}$ such that
 \begin{equation}\label{delta2}
g^+<g^-_*:=\inf\limits_{t>0}\frac{g_*(t)t}{G_*(t)}\leq \frac{g_*(t)t}{G_*(t)}\leq g^{+}_*:=\sup\limits_{t>0}\frac{g_*(t)t}{G_*(t)}<+\infty,\ \text{for all}\ t>0
\end{equation}
(see \cite[Lemma 2.4, p. 240]{8}).\\

The Orlicz space $L^{G}(\Omega)$ is the vectorial space of measurable functions $u:\Omega\rightarrow\mathbb{R}$ such that
$$\rho(u)=\int_{\Omega}G(\vert u(x)\vert)\ {\rm d}x < \infty.$$
  \\
$L^{G}(\Omega)$ is a Banach space under the Luxemburg norm
$$\Vert u\Vert_{(G)}=\inf \left\lbrace  \lambda >0\ : \ \rho(\frac{u}{\lambda}) \leq 1\right\rbrace. $$
For Orlicz spaces, the H\"older inequality rea{\rm d}s as follows
$$\int_{\Omega}uv{\rm d}x\leq \Vert u\Vert_{(G)}\Vert v\Vert_{(\tilde{G})},\ \ \text{for all}\ u\in L^{G}(\Omega)\ \ \text{and}\ u\in L^{\tilde{G}}(\Omega).$$

Next, we introduce the Orlicz-Sobolev space. We denote by
$W^{1,G}(\Omega)$ the Orlicz-Sobolev space defined by
$$W^{1,G}(\Omega):=\bigg{\{}u\in L^{G}(\Omega):\ \frac{\partial u}{\partial x_{i}}\in L^{G}(\Omega),\ i=1,...,N\bigg{\}}.$$
$W^{1,G}(\Omega)$ is a Banach space with respect to the norm
$$\|u\|_G=\|u\|_{(G)}+ \|\nabla u\|_{(G)}.$$
Another equivalent norm is
$$\Vert u\Vert=\inf \left\lbrace  \lambda >0\ : \ \mathcal{K}(\frac{u}{\lambda}) \leq 1\right\rbrace,$$
where
\begin{equation}\label{mod}
\mathcal{K}(u)=\int_{\Omega}G(\vert \nabla u(x)\vert) {\rm d}x+\int_{\Omega}G(\vert u(x)\vert)\ {\rm d}x.
\end{equation}

If $G$ and its complementary function $\tilde{G}$ satisfied the $\Delta _2$-condition, then  $W^{1,G}(\Omega)$ is Banach, separable and reflexive space. For that, in our work, we also assume that $\tilde{G}$ satisfies the $\Delta _2$-condition.\\

In the sequel, we give general results related to the $N$-function and the Orlicz, Orlicz-Sobolev spaces.

\begin{lemma}\label{lem12}(see \cite{11}). Let $G$  and $H$ be $N$-functions, such that $H$ grows essentially more slowly than $G_*$ (where $G_*$ is the Sobolev conjugate function of $G$).
\begin{enumerate}
\item[$(1)$]If $\displaystyle{\int_1^{+\infty}\frac{G^{-1}(t)}{t^{\frac{N+1}{N}}}{\rm d}t=\infty}$ and $\displaystyle{\int_0^{1}\frac{G^{-1}(t)}{t^{\frac{N+1}{N}}}{\rm d}t<\infty}$, then the embedding $W^{1,G}(\Omega)\hookrightarrow L^{H}(\Omega)$ is compact and the embedding $W^{1,G}(\Omega)\hookrightarrow L^{G_*}(\Omega)$ is continuous.
\item[$(2)$] If $\displaystyle{\int_1^{+\infty}\frac{G^{-1}(t)}{t^{\frac{N+1}{N}}{\rm d}t}<\infty}$, then the embedding $W^{1,G}(\Omega)\hookrightarrow L^{H}(\Omega)$ is compact and the embedding $W^{1,G}(\Omega)\hookrightarrow L^{\infty}(\Omega)$ is continuous.
\end{enumerate}
\end{lemma}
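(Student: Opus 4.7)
Since the statement is attributed to reference~[11], my plan is to follow the classical Donaldson--Trudinger approach and sketch how I would recover it in three steps.

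\emph{Step 1: Continuous embedding into $L^{G_*}$.} The starting point is a pointwise representation of $u\in W^{1,G}(\Omega)$ (after a reflection extension across $\partial\Omega$, using that $\partial\Omega$ is $C^2$) through a Riesz-type potential, $|u(x)|\leq C\int_{\Omega}|\nabla u(y)|\,|x-y|^{1-N}\,dy$. Using the layer-cake formula together with the very definition $G_*^{-1}(t)=\int_0^t G^{-1}(s)\,s^{-(N+1)/N}\,ds$, one converts this into the modular estimate $\int_\Omega G_*(|u|/C)\,dx\leq \int_\Omega G(|\nabla u|)\,dx+\int_\Omega G(|u|)\,dx$. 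The $\triangle_2$-condition on $G$ and $\tilde G$ (which guarantees that the Luxemburg norm and the modular are equivalent on bounded sets) then upgrades the modular bound to the continuous embedding $W^{1,G}(\Omega)\hookrightarrow L^{G_*}(\Omega)$.

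\emph{Step 2: Compactness into $L^H$ when $H\prec\prec G_*$.} Take a bounded sequence $(u_n)\subset W^{1,G}(\Omega)$. Since $L^G(\Omega)\hookrightarrow L^1(\Omega)$ (by Jensen combined with the boundedness of $\Omega$), the classical Rellich--Kondrachov theorem at the $W^{1,1}$-level yields a subsequence converging a.e. Uniform integrability in $L^H$ then follows from the uniform bound in $L^{G_*}$ combined with the growth condition $H\prec\prec G_*$: for any $\varepsilon>0$ pick $k$ large so that $H(t)\leq \varepsilon G_*(t/k)$ for all large $t$; Vitali's theorem then upgrades a.e.\ convergence to norm convergence in $L^H(\Omega)$.

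\emph{Step 3: The supercritical case.} If $\int_1^\infty G^{-1}(t)\,t^{-(N+1)/N}\,dt<\infty$, then $G_*^{-1}$ is bounded as $t\to\infty$. Plugging this into the Riesz potential estimate from Step~1 gives a uniform pointwise bound $\|u\|_\infty\leq C(\|\nabla u\|_{(G)}+\|u\|_{(G)})$, hence the continuous embedding $W^{1,G}(\Omega)\hookrightarrow L^\infty(\Omega)$. Compactness into $L^H$ for $H\prec\prec G_*$ then follows immediately via the uniform $L^\infty$ bound combined with a.e.\ convergence along a subsequence. The main obstacle, as I see it, is the careful bookkeeping between the modular $\int G(u)\,dx$ and the Luxemburg norm $\|u\|_{(G)}$: the $\triangle_2$-condition on both $G$ and $\tilde G$ is essential to pass from integral inequalities back to norm inequalities, and the condition $H\prec\prec G_*$ must be leveraged precisely at the uniform-integrability step, since without it one only recovers boundedness rather than compactness.
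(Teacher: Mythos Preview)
The paper does not actually prove this lemma: it is stated with the attribution ``(see \cite{11})'' and no argument is given in the text, so there is no proof of the paper's own to compare against. Your sketch follows the classical Donaldson--Trudinger route (Riesz potential representation, modular estimate via the definition of $G_*^{-1}$, Vitali/uniform integrability for compactness, and the bounded-$G_*^{-1}$ observation in the supercritical case), which is indeed the standard way this result is established in the reference literature; the outline is sound and the identified obstacles (modular versus Luxemburg norm, and the precise use of $H\prec\prec G_*$ at the uniform-integrability step) are the right ones.

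One small caveat: in Step~1 the pointwise Riesz representation $|u(x)|\le C\int_\Omega |\nabla u(y)|\,|x-y|^{1-N}\,dy$ holds as written only after subtracting the mean (or for compactly supported $u$); for general $u\in W^{1,G}(\Omega)$ you need either an extension argument or to carry the zero-order term $\|u\|_{(G)}$ explicitly through the estimate. You gesture at this with the reflection extension, but since $\partial\Omega$ is only assumed $C^2$ (not flat), the extension step deserves a sentence more care. This is a bookkeeping issue rather than a gap in strategy.
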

\begin{lemma}\label{lem1}(see \cite{8})\\
Let $G$ be an $N$-function satisfying $(g_1)-(g_3)$ such that $\displaystyle{G(t)=\int^{ t}_0 g( s)\ {\rm d}s=\int^{ t}_0 a(\vert s\vert)s\ {\rm d}s}$. Then
\begin{enumerate}
\item[$(1)$] $\min\lbrace t^{g^-}, t^{g^+}\rbrace G(1)\leq G(t)\leq \max\lbrace t^{g^-}, t^{g^+}\rbrace G(1)$,  for all $0<t$;
\item[$(2)$] $\min\lbrace t^{g^--1}, t^{g^+-1}\rbrace g(1)\leq g(t)\leq \max\lbrace t^{g^--1}, t^{g^+-1}\rbrace g(1)$,  for all $0<t$;
\item[$(3)$] $\min\lbrace t^{g^--2}, t^{g^+-2}\rbrace a(1)\leq a(t)\leq \max\lbrace t^{g^--2}, t^{g^+-2}\rbrace a(1)$,  for all $0<t$;
\item[$(4)$] $\min\lbrace t^{g^-}, t^{g^+}\rbrace G(z)\leq G(tz)\leq \max\lbrace t^{g^-}, t^{g^+}\rbrace G(z)$,  for all $0<t$ and $z\in\mathbb{R}$;
\item[$(5)$] $\min\lbrace t^{g^--1}, t^{g^+-1}\rbrace g(z)\leq g(tz)\leq \max\lbrace t^{g^--1}, t^{g^+-1}\rbrace g(z)$,  for all $0<t$ and $z\in\mathbb{R}$;
\item[$(6)$] $\min\lbrace t^{g^--2}, t^{g^+-2}\rbrace a(\vert \eta\vert)\leq a(\vert t\eta\vert)\leq \max\lbrace t^{g^--2}, t^{g^+-2}\rbrace a(\vert \eta\vert)$,  for all $0<t$ and $\eta \in \mathbb{R}^N$.
\end{enumerate}
\end{lemma}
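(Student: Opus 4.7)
The plan is to convert each inequality into a logarithmic differential inequality and integrate. For part $(1)$, the hypothesis $(g_2)$ says exactly that $g^- \leq \frac{t G'(t)}{G(t)} \leq g^+$, which rewrites as
\[
\frac{g^-}{t} \;\leq\; \bigl(\log G(t)\bigr)' \;\leq\; \frac{g^+}{t}, \qquad t>0.
\]
Integrating from $1$ to $t$ in the case $t \geq 1$ gives $G(1)\,t^{g^-} \leq G(t) \leq G(1)\,t^{g^+}$, while integrating from $t$ to $1$ in the case $0<t<1$ reverses the exponents. The two cases collapse into the $\min$/$\max$ formulation of $(1)$. Part $(2)$ is the same argument applied to $\log g(t)$, using $(g_3)$ in the form $a^- \leq (\log g(t))'\, t \leq a^+$ with $a^{\pm} = g^{\pm}-1$. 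Part $(3)$ then follows immediately from $(2)$ because $a(t) = g(t)/t$ for $t>0$, so dividing the bounds in $(2)$ by $t$ shifts the exponents from $g^{\pm}-1$ to $g^{\pm}-2$, and $g(1) = a(1)$.

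For the scaling statements $(4)$--$(6)$, the trick is to apply the same ODE argument to an auxiliary function whose logarithmic derivative is controlled by the \emph{same} constants. For $(4)$, fix $z>0$ and set $H(t) := G(tz)$; then
\[
\frac{t H'(t)}{H(t)} \;=\; \frac{(tz)\, g(tz)}{G(tz)} \;\in\; [g^-, g^+],
\]
so the argument of $(1)$ applied to $H$ yields $\min\{t^{g^-},t^{g^+}\}H(1) \leq H(t) \leq \max\{t^{g^-},t^{g^+}\}H(1)$, which is exactly the desired inequality since $H(1)=G(z)$. The case $z<0$ is handled by the evenness of $G$. Part $(5)$ runs identically with $H(t):=g(tz)$ using $(g_3)$; oddness of $g$ disposes of sign issues since the ratio $g(tz)/g(z)$ depends only on $|z|$. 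Part $(6)$ follows from $(3)$ applied to $\Psi(t) := a(t|\eta|)$ for $\eta\in \mathbb{R}^N\setminus\{0\}$, i.e. to the scalar scaling $s \mapsto a(s)$ evaluated at $s=t|\eta|$.

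The only delicate point to watch is the direction of the integration: for $t<1$ one integrates from $t$ to $1$ and picks up $(1/t)^{g^{\pm}}$, which is why the bounds must be stated with $\min$ and $\max$ rather than with $t^{g^-}$ and $t^{g^+}$ separately. I expect this bookkeeping—not any deep analytic estimate—to be the main source of potential error; everything else is a direct consequence of the Grönwall-type integration of a one-variable ODE, together with the identities $g(t)=a(|t|)t$ and $G'=g$ that tie the three functions together.
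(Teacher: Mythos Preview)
Your argument is correct and is exactly the standard logarithmic-derivative proof: rewrite $(g_2)$ and $(g_3)$ as bounds on $(\log G)'$, $(\log g)'$, $(\log a)'$, integrate between $1$ and $t$ (reversing endpoints when $t<1$), and for the scaling versions $(4)$--$(6)$ apply the same estimate to the auxiliary functions $H(t)=G(tz)$, $g(tz)$, $a(t|\eta|)$, whose logarithmic derivatives satisfy the identical bounds. The deduction of $(3)$ from $(2)$ via $a(t)=g(t)/t$ and $g(1)=a(1)$ is fine, and the $\min/\max$ bookkeeping for the two ranges $t\ge 1$ and $0<t<1$ is handled correctly.

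There is nothing to compare against in the paper itself: Lemma~\ref{lem1} is stated with a reference to \cite{8} (Fukagai--Ito--Narukawa) and no proof is given. What you have written is precisely the argument one finds in that reference, so your proposal matches the intended proof. One small caveat worth noting (a sloppiness in the \emph{statement} rather than in your proof): in $(5)$, for $z<0$ the quantities $g(z)$ and $g(tz)$ are negative, so the inequality as literally written reverses; your remark that ``the ratio $g(tz)/g(z)$ depends only on $|z|$'' is the correct way to read it, and the lemma is only ever used for nonnegative arguments in the paper.
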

\begin{lemma}\label{lem13}(See \cite{8}). Let $G$ be an $N$-function satisfying $(g_2)$ such that $\displaystyle{G(t)=\int^{t}_0 g( s)\ {\rm d}s}$. Then
\begin{enumerate}
\item[$(1)$] if $\Vert u\Vert_{(G)}<1$ then $\Vert u\Vert_{(G)}^{g^+}\leq \rho(u)\leq \Vert u\Vert_{(G)}^{g^-}$;
\item[$(2)$] if $\Vert u\Vert_{(G)}\geq 1$ then $\Vert u\Vert_{(G)}^{g^-}\leq \rho(u)\leq \Vert u\Vert_{(G)}^{g^+}$;
\item[$(3)$] if $\Vert u\Vert<1$ then $\Vert u\Vert^{g^+}\leq \mathcal{K}(u)\leq \Vert u\Vert^{g^-}$;
\item[$(4)$] if $\Vert u\Vert\geq 1$ then $\Vert u\Vert^{g^-}\leq \mathcal{K}(u)\leq \Vert u\Vert^{g^+}$.
\end{enumerate}

\end{lemma}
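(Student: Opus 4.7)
The plan is to reduce everything to a single scaling identity for the Luxemburg norm, namely the fact that under the $\Delta_2$-condition the equality $\rho(u/\|u\|_{(G)})=1$ holds for every nonzero $u$, and then feed this into the pointwise growth bounds of Lemma~\ref{lem1}$(4)$. The proof of parts $(3)$–$(4)$ follows by the same argument applied to the Sobolev modular $\mathcal{K}(\cdot)$.

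I would first treat parts $(1)$ and $(2)$. Assume $u\neq 0$ (otherwise the claims are trivial) and set $\lambda:=\|u\|_{(G)}$. Because $G$ and $\widetilde G$ both satisfy $\Delta_2$, the modular $\rho$ is finite and continuous on $L^G(\Omega)$, so the infimum in the definition of the Luxemburg norm is attained, giving $\rho(u/\lambda)=1$. Now apply Lemma~\ref{lem1}$(4)$ pointwise with $t=\lambda$ and $z=|u(x)|/\lambda$: integrating over $\Omega$ yields
\begin{equation*}
\min\{\lambda^{g^-},\lambda^{g^+}\}\,\rho(u/\lambda)\ \le\ \rho(u)\ \le\ \max\{\lambda^{g^-},\lambda^{g^+}\}\,\rho(u/\lambda).
\end{equation*}
Since $\rho(u/\lambda)=1$, this becomes $\min\{\lambda^{g^-},\lambda^{g^+}\}\le\rho(u)\le\max\{\lambda^{g^-},\lambda^{g^+}\}$. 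Splitting according to whether $\lambda<1$ or $\lambda\ge1$ (using $g^-\le g^+$) gives $(1)$ and $(2)$ respectively.

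For parts $(3)$ and $(4)$, set $\mu:=\|u\|$ and repeat verbatim with $\mathcal{K}$ in place of $\rho$. The same continuity/$\Delta_2$ argument yields $\mathcal{K}(u/\mu)=1$. Applying Lemma~\ref{lem1}$(4)$ pointwise to both $G(|\nabla u|/\mu)$ and $G(|u|/\mu)$ with $t=\mu$, then summing and integrating, produces
\begin{equation*}
\min\{\mu^{g^-},\mu^{g^+}\}\ \le\ \mathcal{K}(u)\ \le\ \max\{\mu^{g^-},\mu^{g^+}\},
\end{equation*}
and again splitting into the cases $\mu<1$ and $\mu\ge1$ delivers $(3)$ and $(4)$.

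The only genuinely delicate point is the identity $\rho(u/\|u\|_{(G)})=1$, which is where the hypothesis $(g_2)$ (equivalently the $\Delta_2$-condition for $G$) and the assumption that $\widetilde G$ satisfies $\Delta_2$ are really used; once this identity is in hand, the rest is a direct application of the scaling bounds in Lemma~\ref{lem1}$(4)$ and a case distinction on whether the relevant norm is $<1$ or $\ge1$. I do not anticipate any other obstacle, since the scaling inequalities in Lemma~\ref{lem1}$(4)$ already encode all the non-homogeneity of $G$ that the statement is meant to quantify.
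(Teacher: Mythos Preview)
The paper does not actually prove this lemma; it simply records it with a citation to \cite{8} (Fukagai--Ito--Narukawa). So there is no in-paper argument to compare against, and your proposal is being judged on its own merits.

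Your approach is correct and is precisely the standard one. The reduction to the unit-modular identity $\rho(u/\|u\|_{(G)})=1$ together with the scaling inequality of Lemma~\ref{lem1}(4) is exactly how such modular--norm relations are derived in the Orlicz literature, and your extension to $\mathcal{K}$ for parts~(3)--(4) is the obvious and correct adaptation. One small over-statement: you do not need the $\Delta_2$-condition on $\widetilde G$ for the identity $\rho(u/\|u\|_{(G)})=1$; the $\Delta_2$-condition on $G$ alone (equivalently $(g_2)$) already guarantees that $\lambda\mapsto\rho(u/\lambda)$ is continuous on $(0,\infty)$ via dominated convergence (dominate $G(|u|/\lambda)$ by $G(2|u|/\lambda_0)\le C\,G(|u|/\lambda_0)$ on a left neighbourhood of $\lambda_0$), which is all that is needed to upgrade the infimum to an attained value equal to~$1$. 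This is a cosmetic point and does not affect the validity of your argument.
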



\begin{lemma}\label{lem001}
 Assume that $\Omega$ is a bounded domain with smooth boundary $\partial \Omega$. Then the embedding $W^{1,p}(\Omega)\hookrightarrow L^r(\Omega)$ is compact provided $1 \leq r < p^*$, where
$p^*=\frac{Np}{N-p}$ if $p < N$ and $p^* := +\infty$ otherwise.
\end{lemma}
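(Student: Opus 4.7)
The plan is to prove the compact embedding via the classical Rellich--Kondrachov strategy, which requires two ingredients: (i) the continuous Sobolev embedding $W^{1,p}(\Omega) \hookrightarrow L^{p^*}(\Omega)$ in the case $p < N$ (and into $L^q(\Omega)$ for every finite $q$ in the case $p \geq N$), and (ii) a compactness argument in the $L^p$ topology based on mollification.

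First, since $\partial \Omega$ is smooth, I would invoke the existence of a bounded linear extension operator $E : W^{1,p}(\Omega) \to W^{1,p}(\mathbb{R}^N)$ such that $(Eu)|_\Omega = u$ and $\mathrm{supp}(Eu)$ is contained in a fixed bounded neighborhood $\Omega' \supset\supset \Omega$. This reduces the problem to compactness of bounded sets of functions in $W^{1,p}(\mathbb{R}^N)$ with a fixed compact support. Next, I would establish the continuous embedding into $L^{p^*}$ (for $p<N$) via the Gagliardo--Nirenberg--Sobolev inequality, whose proof proceeds by first handling smooth compactly supported functions through an iterative application of the generalized H\"older inequality applied to the one-dimensional slice estimates $|u(x)| \leq \int_{\mathbb{R}} |\partial_i u|\,ds$, and then extending by density.

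Second, to show compactness in $L^p(\Omega)$ for a bounded sequence $\{u_n\} \subset W^{1,p}(\Omega)$, I would set $v_n := E u_n$ and consider the mollifications $v_n^\varepsilon := v_n \ast \rho_\varepsilon$ with $\rho_\varepsilon$ a standard mollifier. For each fixed $\varepsilon > 0$, the family $\{v_n^\varepsilon\}$ is uniformly bounded and equicontinuous on $\overline{\Omega'}$ (the sup-norm and modulus-of-continuity bounds follow from H\"older's inequality applied to the convolution), so Arzel\`a--Ascoli yields a subsequence converging uniformly, hence in $L^p(\Omega')$. On the other hand, the uniform smoothing estimate $\|v_n^\varepsilon - v_n\|_{L^p(\mathbb{R}^N)} \leq C \varepsilon\, \|\nabla v_n\|_{L^p(\mathbb{R}^N)}$, valid for Sobolev functions, shows that $v_n^\varepsilon \to v_n$ in $L^p$ uniformly in $n$ as $\varepsilon \to 0$. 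A standard diagonal extraction then produces a subsequence that is Cauchy, and hence convergent, in $L^p(\Omega)$.

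Finally, to obtain convergence in $L^r(\Omega)$ for the full range $1 \leq r < p^*$, I would interpolate. For $p \leq r < p^*$, one has $\|u\|_{L^r} \leq \|u\|_{L^p}^\theta\, \|u\|_{L^{p^*}}^{1-\theta}$ with $\theta$ determined by $\tfrac{1}{r} = \tfrac{\theta}{p} + \tfrac{1-\theta}{p^*}$; the first factor tends to zero along the extracted subsequence, while the second stays bounded by the continuous Sobolev embedding already established. For $1 \leq r < p$, H\"older's inequality on the bounded domain $\Omega$ gives $\|\cdot\|_{L^r(\Omega)} \leq |\Omega|^{1/r - 1/p}\|\cdot\|_{L^p(\Omega)}$. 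The case $p \geq N$ is handled analogously, replacing $L^{p^*}$ by any $L^q$ with $q > r$, whose continuous embedding is guaranteed by the Morrey or borderline Sobolev inequality. The main technical hurdle is the second ingredient: constructing the extension operator (which uses the $C^1$ regularity of $\partial\Omega$ to flatten the boundary via local charts and a partition of unity) and verifying that the mollification estimates interact correctly with the diagonal extraction; everything else is either a density argument or a direct interpolation manipulation.
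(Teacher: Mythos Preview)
Your sketch is a faithful outline of the classical Rellich--Kondrachov argument (extension, mollification plus Arzel\`a--Ascoli for $L^p$-compactness, then interpolation against the continuous Sobolev embedding to reach all $1\le r<p^*$), and it is correct as written.

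There is, however, nothing to compare against: the paper states Lemma~\ref{lem001} as a standard fact and gives no proof of its own. It is simply invoked (together with Lemma~\ref{lem002}) in the one-line proof of Theorem~\ref{rem5}. So your proposal supplies strictly more than the paper does; if anything, for the purposes of this manuscript a citation to Adams or any standard Sobolev-spaces reference would be the expected treatment rather than a full Rellich--Kondrachov proof.
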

\begin{lemma}\label{lem002}
Assume that $\Omega$ is a bounded domain and has a Lipschitz boundary  $\partial\Omega$. Then the embedding $W^{1,p}(\Omega)\hookrightarrow L^r(\partial\Omega)$ is compact provided $1 \leq r < p^*$.
\end{lemma}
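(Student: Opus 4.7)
The plan is to reduce the statement to a model half-space situation via a localization of the boundary, and then combine a direct trace representation with the interior compactness supplied by Lemma~\ref{lem001}. Since $\partial\Omega$ is compact and Lipschitz, cover it by finitely many bounded open sets $U_1,\dots,U_m\subset\mathbb{R}^N$ carrying bi-Lipschitz charts $\Phi_i$ that map $U_i\cap\overline{\Omega}$ onto the closed upper half-ball $\overline{B^+}=\{y\in\overline{B(0,1)}:y_N\ge 0\}$ and $U_i\cap\partial\Omega$ onto the flat disk $B(0,1)\cap\{y_N=0\}$. Pick a subordinate $C^\infty$-partition of unity $\{\eta_i\}$ and write $u=\sum_i\eta_i u$. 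Because bi-Lipschitz maps preserve $W^{1,p}$ with equivalent norms and the trace on the flat piece pulls back to the trace on $\partial\Omega\cap U_i$, it suffices to prove compactness of the map $v\mapsto v\big|_{\{y_N=0\}}$ acting on the subspace of $W^{1,p}(B^+)$ consisting of functions supported in $B^+$.

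For smooth such $v$, the fundamental theorem of calculus yields
$$v(y',0)=-\int_0^1\partial_{y_N}\bigl(\chi(y_N)\,v(y',y_N)\bigr)\,dy_N,$$
where $\chi\in C_c^1([0,1))$ is a fixed cutoff with $\chi(0)=1$. Raising this to an appropriate power, integrating in $y'$, applying H\"older's inequality, and passing to the closure via density of smooth functions produces the continuous trace embedding $W^{1,p}(B^+)\hookrightarrow L^q(\{y_N=0\})$ for every $q$ in the admissible range. Summing over the finite atlas gives the bounded trace operator $T\colon W^{1,p}(\Omega)\to L^q(\partial\Omega)$.

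To upgrade continuity to compactness in $L^r(\partial\Omega)$ for $r$ strictly below the critical exponent, take a bounded sequence $(u_n)\subset W^{1,p}(\Omega)$. By Lemma~\ref{lem001} a subsequence (still denoted $u_n$) converges strongly in $L^p(\Omega)$ to some $u$; set $w_n=u_n-u$. Inserting $w_n$ into the local trace identity and then using H\"older with carefully chosen exponents expresses $\|w_n(\cdot,0)\|_{L^r}$ as the product of a uniformly bounded factor (involving $\|\nabla w_n\|_{L^p(B^+)}$) and a factor that tends to zero (a positive power of $\|w_n\|_{L^p(B^+)}$). Summing over the charts gives $Tu_n\to Tu$ in $L^r(\partial\Omega)$, which is the desired compactness.

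The main obstacle is the strict subcriticality of $r$: the raw trace inequality from the formula above is only continuous up to the critical boundary exponent, so one must interpolate against the interior $L^p$-gain from Lemma~\ref{lem001} to convert a priori boundedness into actual convergence. A conceptually cleaner alternative is to factor $W^{1,p}(\Omega)\hookrightarrow W^{s,p}(\Omega)\hookrightarrow L^r(\partial\Omega)$ for a suitably chosen $s\in(1/p,1)$, where the first embedding is compact by the fractional Rellich theorem and the second is a continuous trace in fractional Sobolev scales; this bypasses the direct Kolmogorov-type estimate at the cost of importing fractional Sobolev machinery.
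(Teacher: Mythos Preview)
The paper does not prove this lemma; it is recorded without argument as a classical background fact and is invoked only in the short proof of Theorem~\ref{rem5}. Your proposal therefore supplies substantially more than the paper does.

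The approach you sketch---localization by a finite Lipschitz atlas and partition of unity, the one-dimensional integration identity to produce the trace, and then interpolation of the boundary norm between the interior $L^p$-compactness furnished by Lemma~\ref{lem001} and the uniform $W^{1,p}$ bound---is a standard textbook argument and is correct in outline. The fractional-Sobolev alternative you mention (factoring through $W^{s,p}(\Omega)$ with $s\in(1/p,1)$) is equally legitimate and is in fact the cleaner route if one is willing to quote the fractional Rellich theorem.

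One point worth flagging: you carefully speak of ``the critical boundary exponent,'' which for the trace of $W^{1,p}(\Omega)$ is $(N-1)p/(N-p)$ when $p<N$, not the interior Sobolev exponent $p^*=Np/(N-p)$ that appears in the lemma as stated. Your argument establishes compactness of the trace into $L^r(\partial\Omega)$ precisely for $r<(N-1)p/(N-p)$, which is the sharp range; for $r$ between this value and $p^*$ the trace embedding is not even continuous, so the lemma as printed overstates the conclusion. This does not affect anything downstream in the paper, since the only application (Theorem~\ref{rem5}) uses exponents well below the boundary-critical threshold, but your proof matches the correct critical value rather than the one written.
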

\begin{thm}\label{rem5}
The Orlicz-Sobolev space $W^{1,G}(\Omega)$ is continuously and compactly embedded in the classical Lebesgue spaces $L^r(\Omega)$ and $L^r(\partial\Omega)$ for all $1 \leq r < g^-_*$.
\end{thm}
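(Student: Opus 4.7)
The plan is to reduce the Orlicz-Sobolev embedding to a classical Sobolev embedding by showing that $W^{1,G}(\Omega)$ is continuously contained in $W^{1,g^-}(\Omega)$, and then to invoke the classical Rellich-Kondrachov theorem (Lemma \ref{lem001}) and the trace compactness theorem (Lemma \ref{lem002}) with exponent $g^-$. Note that $(g^-)^* = \frac{Ng^-}{N-g^-} = g^-_*$ if $g^- < N$ and $(g^-)^* = +\infty = g^-_*$ if $g^- \geq N$, so in every case the critical exponent arising from the classical Sobolev embedding of $W^{1,g^-}(\Omega)$ is exactly the threshold $g^-_*$ appearing in the statement.

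First I would establish the continuous inclusion $L^G(\Omega)\hookrightarrow L^{g^-}(\Omega)$. By Lemma \ref{lem1}(1), for every $t \geq 0$ one has $G(t)\geq G(1)\min\{t^{g^-},t^{g^+}\}$, so in particular $G(t)\geq G(1)\, t^{g^-}$ whenever $t\geq 1$. For $u\in L^{G}(\Omega)$ this yields
\[
\int_{\Omega}|u|^{g^-}\,dx
= \int_{\{|u|\geq 1\}}|u|^{g^-}\,dx + \int_{\{|u|<1\}}|u|^{g^-}\,dx
\leq \frac{1}{G(1)}\int_{\Omega}G(|u|)\,dx + |\Omega|,
\]
which, together with the equivalence between the Luxemburg norm and the modular (Lemma \ref{lem13}) and a standard scaling argument, gives $\|u\|_{L^{g^-}(\Omega)} \leq C\, \|u\|_{(G)}$ for some constant $C=C(\Omega,G)$. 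Applying the same estimate to each partial derivative $\partial u/\partial x_i \in L^G(\Omega)$ and summing, I obtain the continuous embedding $W^{1,G}(\Omega)\hookrightarrow W^{1,g^-}(\Omega)$; here I use that $g^->p>1$ by $(g_2)$, so that $W^{1,g^-}(\Omega)$ is a standard Sobolev space.

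Second, I would combine this continuous embedding with the classical compactness results. By Lemma \ref{lem001}, the embedding $W^{1,g^-}(\Omega)\hookrightarrow L^{r}(\Omega)$ is compact for $1\leq r<(g^-)^*=g^-_*$; and by Lemma \ref{lem002} (noting that $\partial\Omega$ is $C^2$, hence Lipschitz), $W^{1,g^-}(\Omega)\hookrightarrow L^{r}(\partial\Omega)$ is compact on the same range. Composing the continuous embedding $W^{1,G}(\Omega)\hookrightarrow W^{1,g^-}(\Omega)$ with either of these compact embeddings yields the desired compact embeddings
\[
W^{1,G}(\Omega)\hookrightarrow L^{r}(\Omega),\qquad W^{1,G}(\Omega)\hookrightarrow L^{r}(\partial\Omega),\qquad 1\leq r<g^-_*,
\]
with the continuity of these embeddings being a byproduct of the same argument.

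I do not anticipate a real obstacle: the one point that must be handled with care is the modular-versus-norm step in the passage from $\int_\Omega G(|u|)\,dx<\infty$ to a quantitative bound $\|u\|_{L^{g^-}(\Omega)}\leq C\|u\|_{(G)}$, since the modular is not a norm. This is resolved by the standard trick of applying the pointwise inequality $G(t)\geq G(1)t^{g^-}$ (for $t\geq 1$) not to $u$ but to $u/\|u\|_{(G)}$, together with Lemma \ref{lem13}, so that the Luxemburg norm controls the $L^{g^-}$-norm. Once this is in place, the whole argument is a short chain of inclusions.
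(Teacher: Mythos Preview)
Your proposal is correct and follows essentially the same approach as the paper: first embed $W^{1,G}(\Omega)$ continuously into $W^{1,g^-}(\Omega)$ (the paper simply cites assumption $(g_2)$ for this, while you spell out the modular argument via Lemma~\ref{lem1}(1) and Lemma~\ref{lem13}), and then apply the classical compactness results of Lemmas~\ref{lem001} and~\ref{lem002}. The only difference is the level of detail you supply for the first step; the overall strategy is identical.
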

\begin{proof}
By help of the assumption $(g_2)$, the Orlicz-Sobolev space $W^{1,G}(\Omega)$ is continuously embedded in the classical Sobolev space $W^{1,g^-}(\Omega)$. In light of Lemmas \ref{lem001} and \ref{lem002}, we deduce that $W^{1,g^-}(\Omega)$ is compactly embedded in $L^r(\Omega)$ and $L^r(\partial\Omega)$ for all $1 \leq r < g^-_*$. Hence, $W^{1,G}(\Omega)$ is continuously and compactly embedded in the classical Lebesgue space $L^r(\Omega)$ and $L^r(\partial\Omega)$ for all $1 \leq r < g^-_*$.
\end{proof}

\begin{lemma}\label{lem3}\cite[Lemma 3.2, p. 354]{6}\\
\begin{enumerate}
\item[$(1)$] If $a(t)$ is increasing for $t>0$, there exists constant $d_1$ depending on $g^-,\ g^+$, such that
\begin{equation} \label{111}
\vert a(\vert \eta\vert)\eta -a(\vert \xi\vert)\xi\vert \leq d_1\vert \eta -\xi\vert a(\vert \eta\vert +\vert \xi\vert),
\end{equation}
for all $\eta,\ \xi\ \in \mathbb{R}^N$.
\item[$(2)$] If $a(t)$ is decreasing for $t>0$, there exists constant $d_2$ depending on $g^-,\ g^+$, such that
\begin{equation} \label{112}
\vert a(\vert \eta\vert)\eta -a(\vert \xi\vert)\xi\vert \leq d_2 g(\vert \eta -\xi\vert),
\end{equation}
for all $\eta,\ \xi\ \in \mathbb{R}^N$.
\end{enumerate}
\end{lemma}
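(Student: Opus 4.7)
I would work with the vector field $H(z) := a(|z|)\,z$ and write the difference as a line integral along the segment joining $\xi$ and $\eta$:
$$a(|\eta|)\eta - a(|\xi|)\xi \;=\; \int_0^1 DH(\zeta_t)(\eta-\xi)\,dt,\qquad \zeta_t := \xi + t(\eta-\xi).$$
A direct computation gives the Jacobian $DH(z) = a(|z|)\,I + a'(|z|)|z|^{-1}\,z\otimes z$, so $|DH(z)| \le a(|z|) + |a'(|z|)|\,|z|$. Lemma~\ref{lem8956} yields $|a'(t)|\,t \le C_0\,a(t)$ with $C_0 := \max(|a^{-}-1|,|a^{+}-1|)$ depending only on $g^{\pm}$. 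The whole matter thus reduces to controlling $\int_0^1 a(|\zeta_t|)\,dt$.

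For Part~(1), since $a$ is increasing on $(0,\infty)$ and $|\zeta_t| \le (1-t)|\xi|+t|\eta| \le |\eta|+|\xi|$ for every $t\in[0,1]$, one has $a(|\zeta_t|)\le a(|\eta|+|\xi|)$ uniformly in $t$; pulling this constant out of the integral gives the claim with $d_1 = 1+C_0$. This is essentially a one-line argument.

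For Part~(2), the decreasing case, the integral $\int_0^1 a(|\zeta_t|)\,dt$ may blow up when the segment passes near the origin, so the naive estimate fails. I would split into cases, assuming without loss of generality $|\xi|\le |\eta|$. If $|\eta-\xi| \le \tfrac12|\xi|$, then $|\zeta_t|\ge |\xi|-t|\eta-\xi|\ge\tfrac12|\xi|\ge|\eta-\xi|$ for every $t\in[0,1]$, and the monotonicity of $a$ gives $a(|\zeta_t|)\le a(|\eta-\xi|)$; the integral estimate then becomes
$$|a(|\eta|)\eta-a(|\xi|)\xi|\le (1+C_0)\,a(|\eta-\xi|)\,|\eta-\xi| = (1+C_0)\,g(|\eta-\xi|).$$
Otherwise $|\eta-\xi|>\tfrac12|\xi|$, so $|\xi|<2|\eta-\xi|$ and $|\eta|\le|\eta-\xi|+|\xi|<3|\eta-\xi|$; the triangle inequality combined with the $\Delta_2$-type scaling of $g$ from Lemma~\ref{lem1}(5) yields $|H(\eta)|+|H(\xi)|=g(|\eta|)+g(|\xi|)\le C\,g(|\eta-\xi|)$ directly, where $C$ depends only on $g^{+}$. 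Taking $d_2$ to be the larger of the two resulting constants gives the claim.

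\textbf{Main obstacle.} The genuine difficulty is Part~(2): the Jacobian of $H$ is not controlled by $a(|\eta|+|\xi|)$, and the integral $\int_0^1 a(|\zeta_t|)\,dt$ can indeed be arbitrarily large. The geometric fact that the segment $\zeta_t$ stays uniformly away from the origin whenever $|\eta-\xi|$ is small compared to $|\xi|$ is the key observation that salvages the argument, and the case split above is the standard device that makes it precise.
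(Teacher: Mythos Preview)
The paper does not prove this lemma; it is stated with a citation to \cite[Lemma~3.2, p.~354]{6} and no argument is given. So there is nothing in the paper to compare your proposal against.

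Your proof is correct and self-contained. The line-integral representation of $H(\eta)-H(\xi)$ together with the Jacobian bound $|DH(z)|\le (1+C_0)\,a(|z|)$ coming from Lemma~\ref{lem8956} is exactly the right tool, and your handling of the two parts is clean. In Part~(1) the monotonicity of $a$ immediately caps $a(|\zeta_t|)$ by $a(|\eta|+|\xi|)$; in Part~(2) the case split is the right idea: when $|\eta-\xi|\le\tfrac12|\xi|$ the segment stays at distance at least $|\eta-\xi|$ from the origin so the decreasing $a$ is controlled, and otherwise both $|\eta|$ and $|\xi|$ are dominated by a fixed multiple of $|\eta-\xi|$ and you bypass the integral entirely via Lemma~\ref{lem1}(5). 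The only minor point worth noting explicitly is that in Part~(2), Case~A, the segment $\zeta_t$ never reaches the origin (since $|\zeta_t|\ge\tfrac12|\xi|>0$), so the possible singularity of $DH$ at $0$ is genuinely avoided; you implicitly use this but it is worth one sentence.
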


\begin{lemma}\label{lem444}
Let $G$ be an $N$-function satisfying $(g_1)-(g_3)$ such that $\displaystyle{G(t)=\int^{ t}_0 g( s)\ {\rm d}s=\int^{ t}_0 a(\vert s\vert)s\ {\rm d}s}$. Then for every $\xi,\eta\in\mathbb{R}^N$, we have
$$\langle a(\vert \eta\vert)\eta-a(\vert \xi\vert)\xi,\eta-\xi\rangle_{\mathbb{R}^N}\geq 0$$
where $\langle.\rangle_{\mathbb{R}^N}$ is the inner product on $\mathbb{R}^N$.
\end{lemma}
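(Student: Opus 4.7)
The plan is to give a direct elementary argument, expanding the inner product and combining Cauchy--Schwarz with the fact that $g$ is an increasing homeomorphism on $\mathbb{R}$. I will avoid invoking convexity of $\eta\mapsto G(|\eta|)$ on $\mathbb{R}^N$ (which would also work, since $a(|\eta|)\eta$ is its gradient), because the direct route gives the result in four lines and does not require any additional lemma beyond what the paper has already introduced.

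Concretely, I first expand
\begin{equation*}
\langle a(|\eta|)\eta - a(|\xi|)\xi,\,\eta-\xi\rangle_{\mathbb{R}^N}
   = a(|\eta|)|\eta|^2 + a(|\xi|)|\xi|^2 - \bigl(a(|\eta|)+a(|\xi|)\bigr)\langle \eta,\xi\rangle_{\mathbb{R}^N}.
\end{equation*}
By hypothesis $(g_1)$ the quantity $a(|\eta|)+a(|\xi|)$ is nonnegative (taking the convention $a(0)=\lim_{t\to 0^{+}}a(t)$ handled separately, or just treating $\eta=0$ or $\xi=0$ as a trivial case). Applying the Cauchy--Schwarz inequality $\langle \eta,\xi\rangle\leq |\eta||\xi|$ then yields
\begin{equation*}
\langle a(|\eta|)\eta - a(|\xi|)\xi,\,\eta-\xi\rangle_{\mathbb{R}^N}
   \geq a(|\eta|)|\eta|^2 + a(|\xi|)|\xi|^2 - \bigl(a(|\eta|)+a(|\xi|)\bigr)|\eta||\xi|.
\end{equation*}

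The key algebraic step is to regroup the right-hand side as
\begin{equation*}
a(|\eta|)|\eta|\bigl(|\eta|-|\xi|\bigr) + a(|\xi|)|\xi|\bigl(|\xi|-|\eta|\bigr)
   = \bigl(a(|\eta|)|\eta| - a(|\xi|)|\xi|\bigr)\bigl(|\eta|-|\xi|\bigr)
   = \bigl(g(|\eta|)-g(|\xi|)\bigr)\bigl(|\eta|-|\xi|\bigr),
\end{equation*}
using the very definition $g(t)=a(|t|)t$. Since $g$ is an increasing homeomorphism of $\mathbb{R}$ onto itself, the scalar product $(g(s)-g(t))(s-t)$ is nonnegative for all $s,t\geq 0$, so the last expression is $\geq 0$, which completes the proof.

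There is essentially no obstacle here; the only point deserving a brief comment is the degenerate case $\eta=0$ or $\xi=0$, where the expression $a(|\cdot|)\cdot$ is continuous and vanishes at the origin (as built into the definition of $g$), so the inequality holds trivially. If one wanted a slicker write-up, the alternative would be to note that $\eta\mapsto G(|\eta|)$ is convex on $\mathbb{R}^N$ (composition of the convex nondecreasing function $G$ on $[0,\infty)$ with the norm) and that its gradient is exactly $a(|\eta|)\eta$, whence monotonicity follows from the classical criterion for gradients of convex functions; but the direct computation above is self-contained and uses only assumption $(g_1)$ together with the fact that $g$ is increasing.
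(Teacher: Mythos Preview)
Your proof is correct and takes a genuinely different route from the paper. The paper argues via convexity of the map $\eta\mapsto G(|\eta|)$ on $\mathbb{R}^N$: it writes the gradient inequality at $\eta$ and at $\xi$ with respect to the midpoint $\frac{\eta+\xi}{2}$, adds the two inequalities to obtain
\[
\tfrac{1}{2}\langle a(|\eta|)\eta - a(|\xi|)\xi,\eta-\xi\rangle_{\mathbb{R}^N}\ \geq\ G(|\eta|)+G(|\xi|)-2G\!\left(\left|\tfrac{\eta+\xi}{2}\right|\right),
\]
and then uses convexity (and monotonicity) of $G$ once more to see that the right-hand side is nonnegative. Your argument bypasses convexity entirely: you expand the inner product, invoke Cauchy--Schwarz, and reduce everything to the one-dimensional inequality $(g(|\eta|)-g(|\xi|))(|\eta|-|\xi|)\geq 0$, which follows immediately from $g$ being increasing. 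Your approach is more elementary---it uses only $(g_1)$ and the monotonicity built into the definition of $g$---and avoids having to identify $a(|\eta|)\eta$ as the gradient of $G(|\cdot|)$ or to invoke the subgradient inequality. The paper's approach, on the other hand, is the standard ``monotone operator $=$ gradient of a convex function'' template, which generalizes more readily when $a(|\eta|)\eta$ is replaced by a less explicit monotone map. Interestingly, you anticipated the paper's method in your commentary and deliberately chose the direct computation instead; both are perfectly valid here.
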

\begin{proof}[Proof]
Let $\eta,\xi\in\mathbb{R}^N$. Since $G$ is  convex, we have
$$G(\vert \eta\vert)\leq G\left( \left| \frac{\eta+\xi}{2}\right|\right)+\langle a(\vert \eta\vert)\eta,\frac{\eta-\xi}{2}\rangle_{\mathbb{R}^N} $$
and
$$G(\vert \xi\vert)\leq G\left( \left| \frac{\eta+\xi}{2}\right|\right)+\langle a(\vert \xi\vert)\xi,\frac{\xi-\eta}{2}\rangle_{\mathbb{R}^N} . $$
Adding the above two relations, we find that
\begin{align}\label{hlel}
\frac{1}{2}\langle a(\vert \eta\vert)\eta-a(\vert \xi\vert)\xi,\eta-\xi\rangle_{\mathbb{R}^N} \geq G(\vert \eta\vert)+G(\vert \xi\vert)-2G\left( \left| \frac{\eta +\xi}{2}\right| \right) \ \ \text{for all}\ \ \eta,\xi\in \mathbb{R}^N.
\end{align}
On the other hand, the convexity and the monotonicity of $G$ give
\begin{align}\label{miss}
G\left( \left| \frac{\eta +\xi}{2}\right| \right)\leq \frac{1}{2}\left[ G\left( \vert \eta \vert \right)+G\left( \vert \xi \vert \right)\right] \ \ \text{for all}\ \ \eta,\xi\in \mathbb{R}^N.
\end{align}
From $(\ref{hlel})$ and $(\ref{miss})$, we get
\begin{align*}
\langle a(\vert \eta\vert)\eta-a(\vert \xi\vert)\xi,\eta-\xi\rangle_{\mathbb{R}^N}\geq 0,\ \ \text{for all}\ \ \eta,\xi\in \mathbb{R}^N.
\end{align*}
The proof is now complete.
\end{proof}

\begin{definition}\label{def0}(See \cite{5})\\
We say that $u\in W^{1,G}(\Omega)$ is a weak solution for problem $(\ref{P})$ if
\begin{align}\label{O.O}
& \int_{\Omega} a(\vert\nabla u\vert)\nabla u.\nabla v {\rm d}x +\int_{\partial\Omega}b(x)\vert u\vert^{p-2}u v {\rm d}\gamma
 =\int_{\Omega}f(x,u)v{\rm d}x,\ \forall v\in W^{1,G}(\Omega)
\end{align}
where ${\rm d}\gamma$ is the measure on the boundary $\partial\Omega$.\\
The energy functional corresponding to problem  $(\ref{P})$ is  the $C^1$-functional  $J :W^{1,G}(\Omega)\rightarrow \mathbb{R}$
defined by
\begin{equation}\label{eq9870}
J(u)=\int_{\Omega}G(\vert\nabla u\vert) {\rm d}x +\frac{1}{p}\int_{\partial\Omega}b(x)\vert u\vert^{p} {\rm d}\gamma-\int_{\Omega}F(x,u){\rm d}x,
\end{equation}
for all $u\in W^{1,G}(\Omega)$. Where $\displaystyle{F(x, t) = \int_{0}^{t} f(x, s) {\rm d}s}$.\\
\end{definition}
\begin{definitions}\label{def333}\ \\
\begin{enumerate}
\item[$(1)$] We say that $u_0\in W^{1,G}(\Omega)$ is a local $C^1(\overline{\Omega})$-minimizer of $J$, if we can find $r_0>0$ such that
$$J(u_0)\leq J(u_0+v),\ \text{for all}\ v\in C^1(\overline{\Omega})\ \text{with}\ \Vert v\Vert_{C^1(\overline{\Omega})}\leq r_0.$$
\item[$(2)$] We say that $u_0\in W^{1,G}(\Omega)$ is a local $W^{1,G}(\Omega)$-minimizer of $J$, if we can find $r_1>0$ such that
$$J(u_0)\leq J(u_0+v),\ \text{for all}\ v\in W^{1,G}(\Omega)\ \text{with}\ \Vert v\Vert\leq r_1.$$
\end{enumerate}
\end{definitions}
Now, we set the assumption on the non-linear term $f$ as follows.
\begin{enumerate}
\item[$(H)$]  $f(x,0)=0$ and there exist  an odd increasing homomorphism $h\in C^{1}(\mathbb{R},\mathbb{R})$, and  a positive function $\widehat{a}(t)\in L^{\infty}(\Omega)$ such that
$$\vert f(x,t)\vert \leq \widehat{a}(x)(1+h(\vert t\vert)), \ \ \forall\ t\in \mathbb{R},\ \forall x\in \overline{\Omega}$$
and
$$G\prec\prec H\prec\prec G_*,$$
 $$\displaystyle{1<g^+<h^-:=\inf\limits_{t>0}\frac{h(t)t}{H(t)}\leq h^{+}:=\sup\limits_{t>0}\frac{h(t)t}{H(t)}\leq\frac{g^-_*}{g^-}},$$
 $$\displaystyle{1<h^--1:=\inf\limits_{t>0}\frac{h^{'}(t)t}{h(t)}\leq h^{+}-1:=\sup\limits_{t>0}\frac{h^{'}(t)t}{h(t)}},$$
 where
 $$H(t):=\int_0^t h(s)\ {\rm d}s,$$
 is an $N$-function.
\end{enumerate}

\begin{rem}\label{rem78}
Some assertions  in Lemma \ref{lem1} are remain valid for the $N$-function $H$ and the function $h$
\begin{enumerate}
\item[$(1)$] $\min\lbrace t^{h^-}, t^{h^+}\rbrace H(1)\leq H(t)\leq \max\lbrace t^{h^-}, t^{h^+}\rbrace H(1)$,  for all $0<t$;
\item[$(2)$] $\min\lbrace t^{h^--1}, t^{h^+-1}\rbrace h(1)\leq h(t)\leq \max\lbrace t^{h^--1}, t^{h^+-1}\rbrace h(1)$,  for all $0<t$;
\item[$(3)$] $\min\lbrace t^{h^-}, t^{h^+}\rbrace H(z)\leq H(tz)\leq \max\lbrace t^{h^-}, t^{h^+}\rbrace H(z)$,  for all $0<t$ and $z\in\mathbb{R}$;
\item[$(4)$] $\min\lbrace t^{h^--1}, t^{h^+-1}\rbrace h(z)\leq h(tz)\leq \max\lbrace t^{h^--1}, t^{h^+-1}\rbrace h(z)$,  for all $0<t$ and $z\in\mathbb{R}$.
\end{enumerate}
\end{rem}
The main results of this paper are:
\begin{thm}\label{thmC11}
Under the assumptions $(G)$ and $(H)$, if $u\in W^{1,G}(\Omega)$ is a non-trivial  weak solution of problem $(\ref{P})$, then $u\in L^{\infty}(\Omega)$ and $\Vert u\Vert_{\infty}\leq M=M(\Vert \widehat{a}\Vert_{\infty}, h(1), g^-,\vert \Omega\vert, \Vert u \Vert_{h^+})$.
\end{thm}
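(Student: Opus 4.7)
The argument is a Moser iteration adapted to the non-homogeneous operator, made possible by the favourable sign of the Robin boundary term. For $M>1$ and $s\geq 1$, set $u_M=\mathrm{sign}(u)\min\{|u|,M\}$ and test $(\ref{O.O})$ against $v=u\,|u_M|^{s-1}\in W^{1,G}(\Omega)\cap L^\infty(\Omega)$. A direct calculation gives $\nabla u\cdot\nabla v\geq |u_M|^{s-1}|\nabla u|^2$ pointwise, and the Robin contribution
$$\int_{\partial\Omega}b(x)|u|^{p-2}u\cdot v\,d\gamma=\int_{\partial\Omega}b(x)|u|^p\,|u_M|^{s-1}\,d\gamma\geq 0,$$
so it may be discarded thanks to $\inf_{\partial\Omega}b>0$. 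This step is precisely what was unavailable in the Dirichlet setting of \cite{6} and makes the Robin case tractable without an a priori trace bound on $u$.

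Dropping the boundary term, using $a(t)t^2\geq g^-G(t)$ (a rephrasing of $(g_2)$) together with $G(t)\geq G(1)|\nabla u|^{g^-}$ on $\{|\nabla u|\geq 1\}$ (Lemma \ref{lem1}), the left-hand side dominates $c\int_\Omega|u_M|^{s-1}|\nabla u|^{g^-}\,dx$ up to a controlled error of size $O(|\Omega|)$. Assumption $(H)$ and Remark \ref{rem78} yield $|f(x,u)|\leq C(\|\widehat{a}\|_\infty,h(1))(1+|u|^{h^+-1})$, so the right-hand side is at most $C\int_\Omega(1+|u|^{h^+})|u_M|^{s-1}\,dx$. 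Converting to chain-rule form via the pointwise inequality $|\nabla(|u|\,|u_M|^{(s-1)/g^-})|^{g^-}\leq C\,s^{g^-}|u_M|^{s-1}|\nabla u|^{g^-}$, sending $M\to\infty$ by Fatou, and applying the continuous embedding $W^{1,g^-}(\Omega)\hookrightarrow L^{g^-_*}(\Omega)$ (Theorem \ref{rem5}) produces the Moser inequality
$$\Bigl(\int_\Omega|u|^{(s+g^--1)g^-_*/g^-}\,dx\Bigr)^{g^-/g^-_*}\leq C\,s^{g^-}\Bigl(1+\int_\Omega|u|^{s+h^+-1}\,dx\Bigr).$$

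One now iterates along a sequence $s_k$ chosen so that the left exponent at stage $k$ matches (or exceeds) the right exponent at stage $k+1$; the strict inequality $h^+<g^-_*/g^-$ from $(H)$, equivalent to $h^+g^-<g^-_*$, guarantees the iterated exponents grow geometrically with ratio $>1$, and a standard geometric-series argument closes the iteration to produce $\|u\|_\infty\leq M=M(\|\widehat{a}\|_\infty,h(1),g^-,|\Omega|,\|u\|_{h^+})$; the starting quantity $\|u\|_{h^+}$ is finite because $H\prec\prec G_*$ and Lemma \ref{lem12} give $W^{1,G}(\Omega)\hookrightarrow L^{h^+}(\Omega)$. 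The principal technical obstacle is the non-power-law nature of $G$, $g$ and $a$: the two-sided bounds of Lemma \ref{lem1} force splitting the estimates into the regimes $\{|\nabla u|\leq 1\}$ and $\{|\nabla u|>1\}$, the factor $s^{g^-}$ in the Moser constant must be balanced against the geometric growth of the exponents to keep the product of constants summable, and admissibility of $v$ together with the monotone passage $M\to\infty$ must be verified at each stage of the iteration.
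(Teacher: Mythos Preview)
Your proposal is correct and follows essentially the same Moser-iteration strategy as the paper: discard the non-negative Robin boundary contribution, exploit the two-sided bounds of Lemma~\ref{lem1} to reduce to $W^{1,g^-}$-estimates, apply the Sobolev embedding into $L^{g^-_*}$, and iterate using $h^+g^-<g^-_*$. The paper organizes this into two stages (Proposition~\ref{prop1}: $u\in L^s$ for all $s$; Proposition~\ref{prop2}: $u\in L^\infty$) and works through a more general auxiliary problem~$(\ref{A})$ for later reuse in Section~4, but the analytic core is the same; one harmless slip in your write-up is that $v=u|u_M|^{s-1}$ need not lie in $L^\infty(\Omega)$, though membership in $W^{1,G}(\Omega)$ is all that is actually used.
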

\begin{thm}\label{thmC12}
Under the assumptions $(G)$ and $(H)$, if $u_0\in W^{1,G}(\Omega)$ is a local $C^1(\overline{\Omega})$-minimizer of $J$,
then $u_0\in C^{1,\alpha}(\overline{\Omega})$ for some $\alpha\in (0,1)$ and $u_0$ is also a local $W^{1,G}(\Omega)$-minimizer of $J$.
\end{thm}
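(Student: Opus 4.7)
The first assertion, that $u_0\in C^{1,\alpha}(\overline{\Omega})$, follows from three ingredients combined in sequence. First, since $u_0$ is a local $C^1(\overline{\Omega})$-minimizer, the inequality $J(u_0+tv)\geq J(u_0)$ for $v\in C^1(\overline{\Omega})$ and $|t|$ small yields $\langle J'(u_0),v\rangle=0$ for every such $v$; by the density of $C^1(\overline{\Omega})$ in $W^{1,G}(\Omega)$ (a consequence of the $\triangle_2$-condition on $G$ and $\widetilde{G}$), this identity extends to all test functions in $W^{1,G}(\Omega)$, so $u_0$ is a weak solution of $(\ref{P})$ in the sense of Definition \ref{def0}. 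Second, applying Theorem \ref{thmC11} produces $u_0\in L^\infty(\Omega)$. Third, feeding this $L^\infty$-bound, together with assumption $(G)$, into the Lieberman regularity theorem \cite{2} for the $g$-Laplacian with Robin-type boundary conditions gives $u_0\in C^{1,\alpha}(\overline{\Omega})$ for some $\alpha\in(0,1)$.

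For the equivalence of minimizers I would argue by contradiction, imitating the scheme of Brezis--Nirenberg \cite{29} and Garc\'ia Azorero--Manfredi--Peral \cite{30}. Assume $u_0$ is not a local $W^{1,G}(\Omega)$-minimizer. Then for every $\varepsilon>0$ there exists $w\in W^{1,G}(\Omega)$ with $\|w-u_0\|<\varepsilon$ and $J(w)<J(u_0)$. On the weakly closed convex set $\overline{B}_\varepsilon(u_0):=\{u\in W^{1,G}(\Omega): \|u-u_0\|\leq\varepsilon\}$, the growth condition $g^+<h^+<g^-_*$ in $(H)$ together with Lemma \ref{lem13} and Theorem \ref{rem5} show that $J$ is coercive on bounded sets and weakly lower semi-continuous, so the direct method produces a minimizer $u_\varepsilon\in\overline{B}_\varepsilon(u_0)$ with $J(u_\varepsilon)\leq J(w)<J(u_0)$. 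In particular $u_\varepsilon\neq u_0$, and $u_\varepsilon\to u_0$ in $W^{1,G}(\Omega)$ as $\varepsilon\to 0$.

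Two cases then arise. If $\|u_\varepsilon-u_0\|<\varepsilon$ then $J'(u_\varepsilon)=0$, so $u_\varepsilon$ is a genuine weak solution of $(\ref{P})$. If $\|u_\varepsilon-u_0\|=\varepsilon$, the Lagrange multiplier rule applied to the smooth modular constraint $\mathcal{K}(u-u_0)=\mathcal{K}(u_\varepsilon-u_0)$ (with $\mathcal{K}$ as in $(\ref{mod})$) gives $\lambda_\varepsilon\leq 0$ such that, for all $v\in W^{1,G}(\Omega)$,
\begin{align*}
&\int_\Omega a(|\nabla u_\varepsilon|)\nabla u_\varepsilon\cdot\nabla v\,dx+\int_{\partial\Omega}b(x)|u_\varepsilon|^{p-2}u_\varepsilon v\,d\gamma-\int_\Omega f(x,u_\varepsilon)v\,dx \\
&\qquad = \lambda_\varepsilon\Bigl[\int_\Omega a(|\nabla(u_\varepsilon-u_0)|)\nabla(u_\varepsilon-u_0)\cdot\nabla v\,dx+\int_\Omega a(|u_\varepsilon-u_0|)(u_\varepsilon-u_0)v\,dx\Bigr].
\end{align*}
In either case, $u_\varepsilon$ satisfies an equation whose principal part and boundary operator retain the structural hypotheses required by both the Moser iteration scheme of Theorem \ref{thmC11} and Lieberman's theorem, with coefficients whose relevant norms are controlled by $\|u_\varepsilon\|\leq\|u_0\|+1$.

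The core obstacle is to show that $\|u_\varepsilon\|_\infty$ is bounded \emph{uniformly} in $\varepsilon$, and then that $\|u_\varepsilon\|_{C^{1,\beta}(\overline{\Omega})}\leq C$ uniformly for some $\beta\in(0,\alpha)$. The uniform $L^\infty$-estimate should come from rerunning the Moser iteration of Theorem \ref{thmC11}: the extra term carrying $\lambda_\varepsilon$, when tested against $|u_\varepsilon|^{s-1}u_\varepsilon$, splits (using Lemma \ref{lem444} and the binomial expansions of $|\nabla(u_\varepsilon-u_0)|$ via $u_0\in C^{1,\alpha}$) into a part with favorable sign absorbed on the left and a part dominated by $\|u_0\|_{C^{1,\alpha}}$ times lower-order $L^q$-quantities, which the iteration handles with a bound depending only on $\|u_0\|_{C^{1,\alpha}}$, $\|\widehat a\|_\infty$, and the fixed constants of $(G),(H)$. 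Once this uniform $L^\infty$-bound is secured, Lieberman's theorem \cite{2} applies with uniform constants and yields a uniform $C^{1,\beta}(\overline{\Omega})$-bound. Finally, the compact embedding $C^{1,\beta}(\overline{\Omega})\hookrightarrow C^1(\overline{\Omega})$ forces $u_\varepsilon\to u_0$ in $C^1(\overline{\Omega})$, so for $\varepsilon$ small enough $u_\varepsilon$ lies in the $C^1$-neighborhood on which $u_0$ minimizes, contradicting $J(u_\varepsilon)<J(u_0)$.
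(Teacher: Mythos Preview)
Your overall strategy coincides with the paper's: first show that $u_0$ is a weak solution, then obtain $u_0\in L^\infty(\Omega)$ via Theorem~\ref{thmC11}, then $u_0\in C^{1,\alpha}(\overline{\Omega})$ via Lieberman; for the minimizer equivalence, argue by contradiction, minimize $J$ over a small ball around $u_0$, write down the Euler--Lagrange relation with a multiplier, obtain uniform $C^{1,\alpha}$ bounds on the constrained minimizers, and reach a contradiction through the compact embedding $C^{1,\alpha}(\overline{\Omega})\hookrightarrow C^{1}(\overline{\Omega})$. This is exactly the route of Propositions~\ref{prop3} and~\ref{prop4}.

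There is, however, a genuine gap in your handling of the multiplier. You assert that the perturbed equation ``retains the structural hypotheses required by \ldots\ Lieberman's theorem, with coefficients whose relevant norms are controlled by $\|u_\varepsilon\|\leq\|u_0\|+1$''. That is not correct as stated: the principal part of your Euler--Lagrange equation is $a(|\nabla u_\varepsilon|)\nabla u_\varepsilon+|\lambda_\varepsilon|\,a(|\nabla(u_\varepsilon-u_0)|)\nabla(u_\varepsilon-u_0)$, and Lieberman's upper structural bound (of the form $\sum_{i,j}|\partial_{\eta_i}A_j|\,|\eta|\leq c(1+g(|\eta|))$) carries a constant proportional to $1+|\lambda_\varepsilon|$. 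Nothing in your argument controls $|\lambda_\varepsilon|$ as $\varepsilon\to 0$, so you cannot extract a \emph{uniform} $C^{1,\alpha}$ estimate for $u_\varepsilon$. The monotonicity/absorption trick you sketch (via Lemma~\ref{lem444}) may well salvage the Moser iteration for the $L^\infty$ bound, but it does not salvage Lieberman's structural constants, which must be explicitly uniform.

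The paper closes this gap by a three-case split on the size of $\lambda_\varepsilon\geq 0$. If $\lambda_\varepsilon=0$ or $0<\lambda_\varepsilon\leq 1$, one works with $v_\varepsilon$ directly and all structural constants are uniformly bounded. If $\lambda_\varepsilon>1$, one changes the unknown to $y_\varepsilon=v_\varepsilon-u_0$ and divides the entire equation by $\lambda_\varepsilon$: the principal part becomes
\[
a(|\nabla y_\varepsilon|)\nabla y_\varepsilon+\tfrac{1}{\lambda_\varepsilon}\bigl[a(|\nabla y_\varepsilon+\nabla u_0|)(\nabla y_\varepsilon+\nabla u_0)-a(|\nabla u_0|)\nabla u_0\bigr],
\]
whose perturbation now carries the coefficient $\tfrac{1}{\lambda_\varepsilon}<1$, so the structural constants are again uniform and Lieberman yields $\|y_\varepsilon\|_{C^{1,\alpha}}\leq M_3$ independently of $\varepsilon$; since $u_0\in C^{1,\alpha}$, the same follows for $v_\varepsilon$. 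This case distinction is the missing idea in your sketch. A minor additional point: you minimize over the \emph{norm} ball but then invoke the multiplier rule for the \emph{modular} level set; these do not coincide. The paper avoids this mismatch by minimizing directly over $\{v:\mathcal{K}(v-u_0)\leq\varepsilon\}$, whose boundary is a smooth level set of the $C^1$ functional $\mathcal{K}$.
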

\section{Boundedness results for weak solutions of problem $(\ref{P})$}
In this section, by using the Moser iteration technique, we prove a result concerning the boundedness regularity for the problem $(\ref{P})$. Our method, inspired by the work of Gasi\`nski and Papageorgiou \cite{3}.\\

Considering the following problem
\begin{equation}\label{A}
\left\lbrace
\begin{array}{ll}
-\text{div}(\mathcal{A}(x,\nabla u))=\mathcal{B}(x,u),\ \text{in}\ \Omega\\
\ \\
\mathcal{A}(x,\nabla u).\nu+\psi(x,u)=0,\ \text{in}\ \partial\Omega
\end{array}
\right.
\tag{A}
\end{equation}
where $\Omega$ is a bounded subset of $\mathbb{R}^N(N\geq 3)$ with $C^2$-boundary, $\mathcal{A}:\Omega\times\mathbb{R}^N\rightarrow\mathbb{R}^N$, $\mathcal{B}:\Omega\times\mathbb{R}\rightarrow\mathbb{R}$ and $\psi:\partial\Omega\times\mathbb{R}\rightarrow\mathbb{R}$. We assume that problem $(\ref{A})$ satisfies the following growth conditions:
\begin{equation}\label{7546999882}
\mathcal{A}(x,\eta)\eta\geq  G(\vert \eta\vert),\ \ \text{for all}\ x\in \Omega\ \text{and}\ \eta\in \mathbb{R}^N,
\end{equation}
\begin{equation}\label{75469998822}
\mathcal{A}(x,\eta)\leq c_0 g(\vert \eta\vert)+c_1,\ \ \text{for all}\ x\in \Omega\ \text{and}\ \eta\in \mathbb{R}^N,
\end{equation}
\begin{equation}\label{754699988222}
\mathcal{B}(x,t)\leq c_2(1+h(\vert t\vert)),\ \ \text{for all}\ x\in \Omega\ \text{and}\ t\in \mathbb{R},
\end{equation}
\begin{equation}\label{7546999882222}
\psi(x,t)\geq 0,\ \ \text{for all}\ x\in \partial\Omega\ \text{and}\ t\in \mathbb{R}_+,
\end{equation}
 where $c_0,c_1,c_2$ are positive constant and $h$ is defined in assumption $(H)$.\\

We say that $u\in W^{1,G}(\Omega)$ is  a weak solution of problem $(\ref{A})$ if
\begin{equation}\label{4546787856}
\int_{\Omega}\mathcal{A}(x,\nabla u)\nabla v {\rm d}x+\int_{\partial\Omega}\psi(x,u)v{\rm d}\gamma=\int_{\Omega}\mathcal{B}(x,u)v{\rm d}x,\ \ \text{for all}\ v\in W^{1,G}(\Omega).
\end{equation}

Let us state the following  useful result
\begin{prop}\label{prop1}Suppose that $(G)$, $(H)$ and $(\ref{7546999882})-(\ref{7546999882222})$ are satisfied. Then, if $u \in W^{1,G}(\Omega)$ is a non-trivial weak solution  of problem $(\ref{A})$, $u$ belongs to $L^{s}(\Omega)$ for every $1\leq s<\infty$.
\end{prop}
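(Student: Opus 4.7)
The plan is to perform a Moser-type iteration adapted to the Orlicz--Sobolev setting, in the spirit of Gasi\`nski--Papageorgiou \cite{3} and Fang--Tan \cite{6}. The starting point is that $u\in L^{s_0}(\Omega)$ for some $s_0\geq 1$, which follows from the embeddings in Lemma \ref{lem12} together with $H\prec\prec G_*$ from hypothesis $(H)$. I then bootstrap this initial integrability, via an iteration argument, to $L^{s}$ integrability for every finite $s$.

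For each parameter $\kappa\geq 0$ (from a sequence that will tend to infinity) and each truncation level $M>0$, I would insert the test function $v=u\,\min(|u|^{\kappa},M)\in W^{1,G}(\Omega)$ into the weak formulation (\ref{4546787856}). Since $v$ has the sign of $u$, splitting $u=u^{+}-u^{-}$ and invoking (\ref{7546999882222}) shows that the boundary contribution from $u^{+}$ is non-negative; the $u^{-}$ part is handled symmetrically via the companion test function $-u^{-}\min((u^{-})^{\kappa},M)$. Dropping the non-negative boundary piece and using the coercivity (\ref{7546999882}) together with the growth (\ref{754699988222}) yields an estimate of the shape
\[
(\kappa+1)\int_{\Omega}\min(|u|^{\kappa},M)\,G(|\nabla u|)\,dx \;\leq\; c_{2}\int_{\Omega}\bigl(1+h(|u|)\bigr)|u|^{\kappa+1}\,dx.
\]

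Using Lemma \ref{lem1} to bound $G(|\nabla u|)$ from below by a suitable power of $|\nabla u|$, the left-hand side becomes (up to the truncation and a positive multiplicative constant) of the form $\int_{\Omega}|\nabla(|u|^{\alpha})|^{g^{-}}\,dx$ with $\alpha=(\kappa+g^{-})/g^{-}$. Applying the embedding $W^{1,g^{-}}(\Omega)\hookrightarrow L^{g^{-}_{*}}(\Omega)$ from Lemma \ref{lem001} transforms this into the key estimate
\[
\|u\|_{L^{\alpha g^{-}_{*}}(\Omega)}^{\alpha g^{-}}\leq C_{\kappa}\Bigl(1+\|u\|_{L^{\kappa+h^{+}}(\Omega)}^{\kappa+h^{+}}\Bigr).
\]
The strict inequality $h^{+}<g^{-}_{*}/g^{-}$ from hypothesis $(H)$ ensures that the exponent $\alpha g^{-}_{*}$ on the left strictly exceeds $\kappa+h^{+}$ on the right, giving a genuine integrability gain. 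This produces a recursion $s_{n+1}=\lambda s_{n}+O(1)$ with $\lambda>1$ on the Lebesgue exponents; iterating starting from $s_{0}$ and then passing $M\to\infty$ by monotone convergence, one concludes $u\in L^{s}(\Omega)$ for every $1\leq s<\infty$.

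\textbf{Main obstacle.} The principal technical difficulty is the non-homogeneity of $G$ and $h$: neither is a pure power function, so the estimates from Lemma \ref{lem1} and Remark \ref{rem78} take different shapes on $\{|u|\leq 1\}$ and $\{|u|>1\}$, which forces one to split integrals and track two regimes simultaneously. The key quantitative point is to show that the constants $C_{\kappa}$ do not grow so fast as to destroy the iteration; the strict sub-criticality $h^{+}<g^{-}_{*}/g^{-}$ encodes precisely the margin $\lambda-1>0$ needed for the recursion to propagate the integrability to arbitrary Lebesgue exponents.
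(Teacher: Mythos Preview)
Your proposal is correct and follows essentially the same Moser-iteration scheme as the paper: test with a truncated power of $u$, drop the boundary integral via the sign condition (\ref{7546999882222}), use the coercivity (\ref{7546999882}) together with Lemma \ref{lem1} to extract a $W^{1,g^-}$ bound on $|u|^{(\kappa+g^-)/g^-}$, apply the Sobolev embedding into $L^{g^-_*}$, and iterate using $h^+<g^-_*/g^-$ to force the exponent sequence to diverge. The only cosmetic differences are that the paper truncates $u$ itself (setting $u_k=\min\{u,k\}$) rather than the power $|u|^\kappa$, and it records the recursion explicitly as $p_{n+1}=\widehat{g}+\frac{\widehat{g}}{g^-}\frac{p_n-h^+}{h^+}$ with $p_0=\widehat{g}$; the splitting you flag as the main obstacle is done in the paper on $\{|\nabla u_k|\leq 1\}$ versus $\{|\nabla u_k|>1\}$ for the left side and on $\{u\leq 1\}$ versus $\{u>1\}$ for the $h$-term on the right, exactly as you anticipate.
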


\begin{proof}[Proof]
Let $u \in W^{1,G}(\Omega)$ be a non-trivial  weak solution of problem $(\ref{A})$, $u^{+}:=\max\lbrace u,0\rbrace\in W^{1,G}(\Omega)$ and $u^{-}:=\max\lbrace -u,0\rbrace\in W^{1,G}(\Omega)$. Since $u=u^+ - u^-$, without loss of generality we may assume that $u\geq 0$.\\
We set, recursively
$$p_{n+1}=\widehat{g} + \frac{\widehat{g}}{g^-}\left(\frac{p_n -h^+}{h^+}\right),\ \ \text{for all}\ n\geq 0,$$
such that
$$ p_0=\widehat{g} = g_*^-=\frac{Ng^-}{N-g^-}\ \ \ (\text{recall that}\ g^-\leq g^+<N).$$
It is clear that the sequence $\lbrace p_n\rbrace_{n\geq 0}\subseteq \mathbb{R}_+$ is increasing. Put
$\displaystyle{\theta_n=\frac{p_n -h^+}{h^+}>0}, \lbrace\theta_n\rbrace_{n\geq 0}$ is an increasing sequence.\\
Let
$$u_k=\min \lbrace u,k\rbrace\in W^{1,G}(\Omega)\cap L^{\infty}(\Omega), \text{for all}\ k\geq 1\ \ \ (\text{since }\ u_k\leq k,\ \ \text{for all}\ k\geq 1).$$
In $(\ref{4546787856})$, we act with $u_k^{\theta_n +1}\in W^{1,G}(\Omega)$, to obtain
\begin{align*}
\int_{\Omega} \mathcal{A}(x,\nabla u).\nabla u_k^{\theta_n +1}{\rm d}x &+\int_{\partial\Omega}\psi(x,u) u_k^{\theta_n +1} {\rm d}\gamma =\int_{\Omega}\mathcal{B}(x,u)u_k^{\theta_n +1}{\rm d}x.
\end{align*}
It follows, by conditions $(\ref{7546999882})$, $(\ref{7546999882222})$ and Lemma \ref{lem1}, that
\begin{align}\label{3}
(\theta_n +1)\int_{\lbrace\vert\nabla u_k\vert\leq 1\rbrace}u_k^{\theta_n}G(\vert\nabla u_k\vert) {\rm d}x &+ (\theta_n +1)G(1)\int_{\lbrace\vert\nabla u_k\vert> 1\rbrace}u_k^{\theta_n}\vert\nabla u_k\vert^{g^-} {\rm d}x\nonumber\\
& \leq  (\theta_n +1)\int_{\Omega}u_k^{\theta_n}G(\vert\nabla u_k\vert) {\rm d}x\nonumber\\
 & \leq (\theta_n +1)\int_{\Omega}u_k^{\theta_n}\left[ \mathcal{A}(x,\nabla u).\nabla u_k\right] {\rm d}x\nonumber \\
 & \leq \int_{\Omega} \mathcal{A}(x,\nabla u).\nabla u_k^{\theta_n +1}{\rm d}x\nonumber\\
& \leq \int_{\Omega}\mathcal{B}(x,u)u_k^{\theta_n +1}{\rm d}x.
\end{align}
Therefore
\begin{align}\label{333}
(\theta_n +1)G(1)\int_{\lbrace\vert\nabla u_k\vert> 1\rbrace}u_k^{\theta_n}\vert\nabla u_k\vert^{g^-} {\rm d}x& \leq \int_{\Omega}\mathcal{B}(x,u)u_k^{\theta_n +1}{\rm d}x,
\end{align}
this gives,
\begin{align}\label{334}
(\theta_n +1)G(1)\int_{\Omega}u_k^{\theta_n}\vert\nabla u_k\vert^{g^-} {\rm d}x & =
(\theta_n +1)G(1)\left[ \int_{\lbrace\vert\nabla u_k\vert> 1\rbrace}u_k^{\theta_n}\vert\nabla u_k\vert^{g^-} {\rm d}x+\int_{\lbrace\vert\nabla u_k\vert\leq 1\rbrace}u_k^{\theta_n}\vert\nabla u_k\vert^{g^-} {\rm d}x\right]\nonumber\\
 & \leq \int_{\Omega}\mathcal{B}(x,u)u_k^{\theta_n +1}{\rm d}x +(\theta_n +1)G(1)\int_{\lbrace\vert\nabla u_k\vert\leq 1\rbrace}u_k^{\theta_n}\vert\nabla u_k\vert^{g^-} {\rm d}x\nonumber\\
 & \leq \int_{\Omega}\mathcal{B}(x,u)u_k^{\theta_n +1}{\rm d}x +(\theta_n +1)G(1)\int_{\Omega}u_k^{\theta_n} {\rm d}x.
\end{align}
Thus
\begin{align}\label{33455}
\int_{\Omega}u_k^{\theta_n}\vert\nabla u_k\vert^{g^-} {\rm d}x & \leq \frac{1}{(\theta_n +1)G(1)} \int_{\Omega}\mathcal{B}(x,u)u_k^{\theta_n +1}{\rm d}x\nonumber \\
 & +\int_{\Omega}u_k^{\theta_n} {\rm d}x.
\end{align}
Since $\theta_n\leq p_n$, and by the continuous embedding $L^{p_n}(\Omega)\hookrightarrow L^{\theta_n}(\Omega)$, then
\begin{equation}\label{33505}
\int_{\Omega}u_k^{\theta_n} {\rm d}x\leq \vert \Omega\vert^{1-\frac{\theta_n}{p_n}}\Vert u_k\Vert^{\theta_n}_{p_n},\ \ \text{for all}\ k\geq 1.
\end{equation}
Combining $(\ref{33455})$ and $(\ref{33505})$, we infer that
\begin{equation}\label{3366}
\int_{\Omega}u_k^{\theta_n}\vert\nabla u_k\vert^{g^-} {\rm d}x\leq \frac{1}{(\theta_n +1)G(1)}\int_{\Omega}\mathcal{B}(x,u)u_k^{\theta_n +1}{\rm d}x+\vert \Omega\vert^{1-\frac{\theta_n}{p_n}}\Vert u_k\Vert^{\theta_n}_{p_n}.
\end{equation}
Let us observe that
\begin{align*}
\nabla u_k^{\frac{\theta_n +g^-}{g^-}} & =\nabla u_k^{(\frac{\theta_n}{g^-}+1)}= (\frac{\theta_n}{g^-}+1)u_k^{\frac{\theta_n}{g^-}}\nabla u_k
\end{align*}
and
\begin{align*}
\left| \nabla u_k^{\frac{\theta_n +g^-}{g^-}}\right| ^{g^-} =  \left( \frac{\theta_n}{g^-}+1\right) ^{g^-}u_k^{\theta_n}\left|\nabla u_k\right|^{g^-}.
\end{align*}
Integrating  over $\Omega$, we get
\begin{align}\label{4}
\int_{\Omega} \left| \nabla u_k^{\frac{\theta_n +g^-}{g^-}}\right|^{g^-}{\rm d}x & = \left( \frac{\theta_n}{g^-}+1\right)^{g^-}\int_{\Omega}u_k^{\theta_n}\left|\nabla u_k\right|^{g^-}{\rm d}x.
\end{align}
Putting together $(\ref{3366})$ and $(\ref{4})$, we conclude that
\begin{align}\label{5}
\int_{\Omega} \left| \nabla u_k^{\frac{\theta_n +g^-}{g^-}}\right|^{g^-}{\rm d}x & \leq \left( \frac{\theta_n}{g^-}+1\right)^{g^-}\left[ \frac{1}{(\theta_n +1)G(1)}\int_{\Omega}\mathcal{B}(x,u)u_k^{\theta_n +1}{\rm d}x+\vert \Omega\vert^{1-\frac{\theta_n}{p_n}}\Vert u_k\Vert^{\theta_n}_{p_n}\right]\nonumber\\
& \leq \left( \theta_n+1\right)^{g^-}\left[ \frac{1}{(\theta_n +1)G(1)}\int_{\Omega}\mathcal{B}(x,u)u_k^{\theta_n +1}{\rm d}x+\vert \Omega\vert^{1-\frac{\theta_n}{p_n}}\Vert u_k\Vert^{\theta_n}_{p_n}\right]\nonumber\\
&\leq C_0\left( \int_{\Omega}\mathcal{B}(x,u)u_k^{\theta_n +1}{\rm d}x+\left(  1+\Vert u_k\Vert^{p_n}_{p_n}\right) \right),
\end{align}
where $\displaystyle{C_0=\left( \theta_n+1\right)^{g^-}\left( \frac{1}{(\theta_n +1)G(1)}+\vert \Omega\vert^{1-\frac{\theta_n}{p_n}}\right) > 0}.$\\
On the other side, using the condition $(\ref{754699988222})$ and  Remark \ref{rem78}, we see that
\begin{align}\label{6}
\int_{\Omega}\mathcal{B}(x,u)u_k^{\theta_n +1}{\rm d}x & \leq  c_2\int_{\Omega}\left( 1+ h(\vert u\vert)\right)u_{k}^{\theta_n +1}{\rm d}x\nonumber\\
& \leq  c_2\int_{\Omega}\left(  1+ h(1)\max\lbrace\vert u\vert^{h^--1},\vert u\vert^{h^+-1}\rbrace \right)u_k^{\theta_n +1}{\rm d}x\nonumber\\
& \leq  c_2\left( \Vert u_k\Vert_{\theta_n+1}^{{\theta_n+1}}+ h(1)\int_{\Omega}\max\lbrace\vert u\vert^{h^--1},\vert u\vert^{h^+-1}\rbrace u_k^{\theta_n +1}{\rm d}x\right) \nonumber\\
& \leq  c_2\left[ \Vert u_k\Vert_{\theta_n+1}^{{\theta_n+1}}+ h(1)\left( \int_{\lbrace u\leq 1\rbrace} u^{h^--1} u_k^{\theta_n +1}{\rm d}x+\int_{\lbrace u>1\rbrace} u^{h^+-1} u_k^{\theta_n +1}{\rm d}x\right)\right]  \nonumber\\
& \leq  c_2\left[ \left( 1+h(1)\right) \Vert u_k\Vert_{\theta_n+1}^{{\theta_n+1}}+ h(1)\int_{\Omega} u^{h^+-1} u_k^{\theta_n +1}{\rm d}x\right]  \nonumber\\
& \leq c_2\left[  \left( 1+h(1)\right) \Vert u_k\Vert_{\theta_n+1}^{{\theta_n+1}}+ h(1)\Vert u\Vert_{h^+}^{h^+-1}\Vert u_k\Vert^{\theta_n +1}_{(\theta_n +1)h^+}\right]  \ \ (\text{H\"older with}\ h^+\ \text{and}\ (h^+)^{'}=\frac{h^+}{h^+-1})\nonumber\\
& = c_2\left[ \left( 1+h(1)\right) \Vert u_k\Vert_{\theta_n+1}^{{\theta_n+1}}+ h(1)\Vert u\Vert_{h^+}^{h^+-1}\Vert u_k\Vert^{\theta_n +1}_{p_n} \right] \nonumber\\
& \leq c_2\left[ \left( 1+h(1)\right)\vert \Omega\vert^{1-\frac{\theta_n+1}{p_n}} \Vert u_k\Vert^{\theta_n+1}_{{p_n}}+ h(1)\Vert u\Vert_{h^+}^{h^+-1}\Vert u_k\Vert^{\theta_n +1}_{p_n}\right]\ \ (\text{since}\ L^{p_n}(\Omega)\hookrightarrow L^{\theta_n+1}(\Omega))  \nonumber\\
& \leq c_2\left[  \left( 1+h(1)\right)\vert \Omega\vert^{1-\frac{1}{h^+}}+h(1)\Vert u\Vert_{h^+}^{h^+-1}\right] \Vert u_k\Vert_{p_n}^{\theta_n+1}\ \ (\text{since}\ \frac{\theta_n+1}{p_n}=\frac{1}{h^+})\nonumber\\
& \leq C_1\left(1+ \Vert u_k\Vert_{p_n}^{p_n}\right),
\end{align}
where $\displaystyle{C_1=c_2\left[  \left( 1+h(1)\right)\vert \Omega\vert^{1-\frac{1}{h^+}}+h(1)\Vert u\Vert_{h^+}^{h^+-1}\right] >0}$.  In $(\ref{6})$, we used the fact that $\theta_n+1<(\theta_n+1)h^+=p_n$.\\
 Using $(\ref{5})$ and  $(\ref{6})$, we find
\begin{align}\label{ddhlel}
\int_{\Omega}\left| \nabla u_k^{\frac{\theta_n +g^-}{g^-}}\right| ^{g^-}{\rm d}x+\int_{\Omega}\left|  u_k^{\frac{\theta_n +g^-}{g^-}}\right| ^{g^-}{\rm d}x &\leq C_2 \left(1+ \Vert u_k\Vert^{p_n}_{p_n} \right)+\int_{\Omega}\left|  u_k^{\frac{\theta_n +g^-}{g^-}}\right| ^{g^-}{\rm d}x\nonumber\\
& \leq C_2 \left(1+ \Vert u_k\Vert^{p_n}_{p_n} \right)+\vert \Omega\vert^{1-\frac{\theta_n+g^-}{p_n}}\Vert u_k\Vert^{\theta_n +g^-}_{p_n}\nonumber\\
& \leq \left( C_2+\vert \Omega\vert^{1-\frac{\theta_n+g^-}{p_n}}\right)  \left(1+ \Vert u_k\Vert^{p_n}_{p_n} \right)\nonumber\\
&  = C_3 \left(1+ \Vert u_k\Vert^{p_n}_{p_n} \right),
\end{align}
where $C_2=C_0(C_1+1)$ and $\displaystyle{C_3=C_2+\vert \Omega\vert^{1-\frac{\theta_n+g^-}{p_n}}}.$\\
The inequality $(\ref{ddhlel})$ gives

\begin{equation}\label{7}
 \left\lVert u_k^{\frac{\theta_n +g^-}{g^-}}\right\rVert^{g^-}_{W^{1,g^-}(\Omega)} \leq C_3 \left(1+ \Vert u_k\Vert^{p_n}_{p_n} \right).
\end{equation}
Recall that $p_{n+1}=\widehat{g}+\frac{\widehat{g}}{g^-}\theta_n$ and so
\begin{equation}\label{4787545455456}
\frac{\theta_n+g^-}{g^-}=\frac{p_{n+1}}{\widehat{g}}.
\end{equation}
Since $g^-<\widehat{g}=\frac{Ng^-}{N-g^-}=g^-_*$, then the embedding $W^{1,g^-}(\Omega)\hookrightarrow L^{\widehat{g}}(\Omega)$ is continuous.\\ Hence, there is $C_4>0$ such that
\begin{equation}\label{dddhlel}
\left\lVert u_k^{\frac{\theta_n +g^-}{g^-}}\right\rVert^{g^-}_{\widehat{g}}\leq C_4\left\lVert u_k^{\frac{\theta_n +g^-}{g^-}}\right\rVert^{g^-}_{W^{1,g^-}(\Omega)}.
\end{equation}
Combining $(\ref{7})$, $(\ref{4787545455456})$ and $(\ref{dddhlel})$, we obtain
\begin{equation}\label{8}
\Vert u_k\Vert^{\frac{p_{n+1}}{\widehat{g}}g^-}_{p_{n+1}}\leq C_5 \left(1+ \Vert u_k\Vert^{p_n}_{p_n} \right),
\end{equation}
where $C_5=C_4C_3.$ Next, let $k\rightarrow+\infty$ in $(\ref{8})$ and applying the  monotone convergence theorem, we find that
\begin{equation}\label{9}
\Vert u\Vert^{\frac{p_{n+1}}{\widehat{g}}g^-}_{p_{n+1}}\leq C_5 \left(1+ \Vert u\Vert^{p_n}_{p_n} \right).
\end{equation}
Since $p_0=\widehat{g}$ and the embeddings $W^{1,G}(\Omega)\hookrightarrow W^{1,g^-}(\Omega)\hookrightarrow  L^{\widehat{g}}(\Omega)$ are continuous, from $(\ref{9})$, we get
\begin{equation}\label{10}
u\in L^{p_n}(\Omega),\ \ \text{for all}\ n\geq 0.
\end{equation}
Note that $p_n\rightarrow +\infty$ as $n\rightarrow +\infty.$ Indeed, suppose that the sequence $\lbrace p_n\rbrace_{n\geq 0}\subseteq [\widehat{g}, +\infty)$ is bounded. Then we have $p_n\longrightarrow \widehat{p}\geq \widehat{g}$ as $n\rightarrow +\infty.$ By definition we have
$$p_{n+1}=\widehat{g}+\frac{\widehat{g}}{g^-}\left(\frac{p_n-h^+}{h^+}\right)\ \ \text{for all}\ n\geq 0,$$
with $p_0=\widehat{g}$, so
$$\widehat{p}=\widehat{g}+\frac{\widehat{g}}{g^-}\left(\frac{\widehat{p}-h^+}{h^+}\right),$$
thus $$0\leq\widehat{p}\left( \frac{\widehat{g}}{g^-h^+}-1\right)=\widehat{g}\left( \frac{1}{g^-}-1\right)<0$$
which gives us a contradiction since $g^-h^+\leq \widehat{g}=g^-_*$ (see  assumption $(H)$).
Recall that for any measurable function $u:\Omega\longrightarrow\mathbb{R},$ the set
$$S_u=\left\lbrace p\geq 1: \Vert u\Vert_p<+\infty\right\rbrace $$
is an interval. Hence, $S_u=[1,+\infty)$ (see $(\ref{10})$) and
\begin{equation}\label{11}
u\in L^s(\Omega),\ \ \text{for all}\ s\geq 1.
\end{equation}
This en{\rm d}s the proof.
\end{proof}
In the following, we prove that, if $u\in W^{1,G}(\Omega)$ is a weak solution  of problem $(\ref{A})$ such that $u\in L^s(\Omega)$ for all $1\leq s<\infty$, then $u$ is a bounded function.

\begin{prop}\label{prop2}
Assume that $(G)$, $(H)$ and $(\ref{7546999882})-(\ref{7546999882222})$ hold. Let $u\in W^{1,G}(\Omega)$ be a non-trivial  weak solution of problem $(\ref{A})$ such that $u\in L^{s}(\Omega)$ for all $1\leq s<\infty$, then $u\in L^{\infty}(\Omega)$ and $\Vert u\Vert_{\infty}\leq M=M(c_2, h(1), g^-,\vert \Omega\vert, \Vert u \Vert_{h^+})$.
\end{prop}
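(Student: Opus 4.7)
The plan is to run a Moser iteration on the weak formulation, refining the scheme of Proposition \ref{prop1} so as to produce an $L^\infty$ bound rather than merely $L^s$ integrability for every finite $s$. Without loss of generality assume $u \geq 0$ a.e.\ in $\Omega$, and set $u_k := \min(u,k) \in W^{1,G}(\Omega) \cap L^\infty(\Omega)$. Define the iteration
$$\theta_{n+1} + h^+ = (\theta_n + g^-)\frac{\widehat{g}}{g^-}, \qquad p_n := \theta_n + h^+,$$
with $\theta_0$ chosen large (permissible since $u \in L^s(\Omega)$ for every $s \geq 1$ by Proposition \ref{prop1}). Testing $(\ref{4546787856})$ with the admissible function $v = u_k^{\theta_n+1}$ and repeating the manipulations that led to the estimate for $\int |\nabla u_k^{(\theta_n + g^-)/g^-}|^{g^-}\,dx$ in the proof of Proposition \ref{prop1}---via the structure conditions $(\ref{7546999882})$--$(\ref{7546999882222})$, Lemma \ref{lem1}, and the Sobolev embedding $W^{1,g^-}(\Omega) \hookrightarrow L^{\widehat{g}}(\Omega)$---I obtain
$$\|u_k\|_{p_{n+1}}^{p_{n+1}g^-/\widehat{g}} \leq C(\theta_n+1)^{g^-}\Bigl(\int_\Omega \mathcal{B}(x,u) u_k^{\theta_n+1}\,dx + 1 + \|u_k\|_{p_n}^{p_n}\Bigr).$$

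The key departure from Proposition \ref{prop1} is how the nonlinear term $\int \mathcal{B}(x,u) u_k^{\theta_n+1}\,dx$ is controlled. Rather than invoking H\"older's inequality to extract a factor $\|u\|_{h^+}^{h^+-1}$---which forces the iteration $p_n = (\theta_n+1)h^+$ and a ratio $\widehat{g}/(g^- h^+)$ too weak for an $L^\infty$ bound---I exploit $u \in L^{p_n}(\Omega)$ together with the pointwise estimate $h(u) \leq h(1)(1 + u^{h^+-1})$ and $u_k \leq u$ to bound directly
$$\int_\Omega \mathcal{B}(x,u) u_k^{\theta_n+1}\,dx \leq c_2(1 + h(1))\int_\Omega u^{\theta_n+1}\,dx + c_2 h(1)\int_\Omega u^{\theta_n+h^+}\,dx \leq \widetilde{C}\bigl(1 + \|u\|_{p_n}^{p_n}\bigr),$$
with $\widetilde{C}$ depending only on $c_2, h(1), |\Omega|, h^+$. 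Letting $k \to \infty$ via monotone convergence produces the clean recursion
$$\|u\|_{p_{n+1}}^{p_{n+1}g^-/\widehat{g}} \leq C_n\bigl(1 + \|u\|_{p_n}^{p_n}\bigr),$$
with $C_n$ polynomial in $p_n$ and depending linearly on the listed constants.

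Setting $\kappa := \widehat{g}/g^- > 1$, the exponent recursion reads $p_{n+1} = \kappa p_n + \widehat{g}(g^- - h^+)/g^-$, whose explicit solution $p_n = (p_0 - c_\infty)\kappa^n + c_\infty$ with $c_\infty := \widehat{g}(h^+ - g^-)/(\widehat{g} - g^-)$ shows $p_n \uparrow \infty$ geometrically, provided $\theta_0$ is chosen so that $p_0 > c_\infty$. With $a_n := \max(1, \|u\|_{p_n})$ and $\delta_n := p_n \widehat{g}/(p_{n+1}g^-)$, the recursion rewrites as $a_{n+1} \leq (2C_n)^{\widehat{g}/(p_{n+1}g^-)} a_n^{\delta_n}$; the telescoping identity $\prod_{n=0}^{N-1}\delta_n = \kappa^N p_0/p_N$ stays bounded as $N \to \infty$ (since $p_N \sim (p_0 - c_\infty)\kappa^N$), and $\log(2 C_n)/p_{n+1} = O(n \kappa^{-n})$ is summable, so iterating yields a uniform bound $a_N \leq M$. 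Sending $N \to \infty$ and using $\|u\|_{p_N} \to \|u\|_\infty$ gives $\|u\|_\infty \leq M$. I anticipate the principal obstacle to be precisely this bookkeeping of $\prod \delta_n$ and $\sum \log C_n/p_{n+1}$, both controlled by the geometric blow-up of $p_n$; the final dependence of $M$ on $c_2, h(1), g^-, |\Omega|, \|u\|_{h^+}$ is traced through $\|u\|_{p_0}$ back to Proposition \ref{prop1}.
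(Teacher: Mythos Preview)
Your iteration is valid and delivers $u\in L^\infty(\Omega)$, but it follows a genuinely different route from the paper's. The paper also tests with $u_k^{\theta_n+1}$ and passes through $W^{1,g^-}\hookrightarrow L^{\widehat g}$, but it controls $\int_\Omega \mathcal B(x,u)u_k^{\theta_n+1}\,dx$ by H\"older with exponents $h^+$ and $(h^+)'$, extracting the \emph{fixed} factor $\|u\|_{h^+}^{h^+-1}$ and leaving $\|u_k\|_{\sigma_n}^{\sigma_n/h^+}$ on the right (with $\sigma_n=(\theta_n+1)h^+$). This produces the recursion $\|u\|_{\sigma_{n+1}}^{\sigma_{n+1}g^-/\widehat g}\le (\sigma_{n+1})^{g^-}C_{11}\|u\|_{\sigma_n}^{\sigma_n/h^+}$, whose exponent ratio $\kappa\sigma_n/(h^+\sigma_{n+1})<1$ is contracting; the paper then invokes the ready-made iteration lemma \cite[Theorem~6.2.6]{npl} to conclude. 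Your scheme instead absorbs the full power $u^{h^+-1}u_k^{\theta_n+1}\le u^{p_n}$ into the right-hand side, which yields the simpler recursion $\|u\|_{p_{n+1}}^{p_{n+1}/\kappa}\le C_n(1+\|u\|_{p_n}^{p_n})$ but with expanding ratio $\delta_n=\kappa p_n/p_{n+1}>1$; you compensate by the nice observation that $\prod\delta_n=\kappa^Np_0/p_N$ stays bounded thanks to the geometric growth of $p_n$, and you carry out the telescoping by hand. What each buys: the paper's H\"older trick makes the dependence $M=M(\|u\|_{h^+},\dots)$ immediate and offloads the iteration to a cited lemma; your approach is self-contained and avoids the external reference, at the cost of the extra bookkeeping on $\prod\delta_n$ and $\sum\log C_n/p_{n+1}$. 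One caveat on your final sentence: tracing $\|u\|_{p_0}$ back through Proposition~\ref{prop1} lands you on $\|u\|_{\widehat g}$ (the starting norm there) together with $\|u\|_{h^+}$, so your $M$ naturally depends on $\|u\|_{\widehat g}$ rather than $\|u\|_{h^+}$ alone; this does not affect the $L^\infty$ conclusion but slightly alters the form of the constant.
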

\begin{proof}[Proof]
Let $u \in W^{1,G}(\Omega)$ be a non-trivial  weak solution of problem $(\ref{A})$, $u^{+}:=\max\lbrace u,0\rbrace\in W^{1,G}(\Omega)$ and $u^{-}:=\max\lbrace -u,0\rbrace\in W^{1,G}(\Omega)$. Since $u=u^+ - u^-$, we may assume without loss of generality that $u\geq 0$.\\
Let $\sigma_0=\widehat{g}=g_*^-=\frac{Ng^-}{N-g^-}$ and we define by a recursively way
$$\sigma_{n+1}=\left(\frac{\sigma_n }{h^+}-1+g^-\right)\frac{\widehat{g}}{g^-},\ \ \text{for all}\ n\geq 0.$$
 We have that the sequence $\displaystyle{\lbrace \sigma_n\rbrace_{n\geq 0}\subseteq [\widehat{g}, +\infty)}$ is increasing and $\sigma_n\longrightarrow+\infty$ as $n\rightarrow+\infty$.
Arguing as in the proof of Proposition \ref{prop1}, with  $\theta_n=\frac{\sigma_n }{h^+}-1$ and $u_k^{\theta_n+1}\in W^{1,G}(\Omega)\cap L^\infty(\Omega)$ as a test function in $(\ref{4546787856})$. So, we find the following estimation
\begin{align}\label{588}
\int_{\Omega} \left| \nabla u_k^{\frac{\theta_n +g^-}{g^-}}\right|^{g^-}{\rm d}x & \leq \left( \theta_n+1\right)^{g^-}\left[ \frac{1}{(\theta_n +1)G(1)}\int_{\Omega}\mathcal{B}(x,u)u_k^{\theta_n +1}{\rm d}x+\int_{\Omega} u_k^{\theta_n}{\rm d}x\right]
\end{align}

Using the assumption $(\ref{754699988222})$, $(\ref{11})$,  Remark \ref{rem78} and H\"older inequality (with $h^+$ and $(h^+)^{'}=\frac{h^+}{h^+-1}$), we deduce that
\begin{align}\label{7888}
\int_{\Omega}\mathcal{B}(x,u)u_k^{\theta_n +1}{\rm d}x & =\int_{\Omega}\mathcal{B}(x,u)u_k^{\frac{\sigma_n }{h^+}}{\rm d}x\nonumber\\
 &\leq c_2\int_{\Omega}\left( 1+h(1)\max\left\lbrace u^{h^--1},u^{h^+-1}\right\rbrace \right)u_k^{\frac{\sigma_n }{h^+}}{\rm d}x\nonumber\\
&\leq c_2\int_{\Omega}\left( (1+h(1))+h(1)u^{h^+-1}\right)u_k^{\frac{\sigma_n }{h^+}}{\rm d}x\ (\ \text{since}\ h^-\leq h^+)\nonumber\\
&\leq c_2\left[(1+h(1))\int_{\Omega} u_k^{\frac{\sigma_n }{h^+}}{\rm d}x+ h(1)\int_{\Omega}u^{h^+-1}u_k^{\frac{\sigma_n }{h^+}}{\rm d}x\right]\nonumber \\
&\leq c_2\left[(1+h(1))\Vert u_k\Vert^{\frac{\sigma_n }{h^+}}_{\frac{\sigma_n }{h^+}}+ h(1)\left( \int_{\Omega}u^{h^+}{\rm d}x\right)^{\frac{h^+-1}{h^+}}\left( \int_{\Omega}u_k^{\sigma_n}{\rm d}x\right)^{\frac{1}{h^+}}\right]\nonumber\\
&\leq c_2\left((1+h(1))\vert \Omega\vert^{1-\frac{1}{h^+}}\Vert u_k\Vert^{\frac{\sigma_n }{h^+}}_{\sigma_n}+ h(1)\Vert u\Vert_{h^+}^{h^+-1}\Vert u_k\Vert_{\sigma_n}^{\frac{\sigma_n }{h^+}}\right)\ (\ \text{since}\ L^{\sigma_n}(\Omega)\hookrightarrow L^{\frac{\sigma_n }{h^+}}(\Omega)\ )\nonumber \\
&\leq c_2\left((1+h(1))\vert \Omega\vert^{1-\frac{1}{h^+}}+ h(1)\Vert u\Vert_{h^+}^{h^+-1}\right)\Vert u_k\Vert_{\sigma_n}^{\frac{\sigma_n }{h^+}}\nonumber \\
&\leq C_6\Vert u_k\Vert_{\sigma_n}^{\frac{\sigma_n }{h^+}}
\end{align}
for all $n\in\mathbb{N}$, where $\displaystyle{C_6=c_2\left((1+h(1))\vert \Omega\vert^{1-\frac{1}{h^+}}+ h(1)\Vert u\Vert_{h^+}^{h^+-1}\right)}.$\\
Using the fact that $L^{\sigma_n}(\Omega)\hookrightarrow L^{\frac{\sigma_n }{h^+}-1}(\Omega)$, we obtain
\begin{align}\label{888}
\int_{\Omega} u_k^{\theta_n}{\rm d}x &= \int_{\Omega} u_k^{\frac{\sigma_n }{h^+}-1}{\rm d}x\nonumber\\
& \leq \vert\Omega\vert^{1-\frac{1}{h^+}+\frac{1}{\sigma_n}}\Vert u_k\Vert_{\sigma_n}^{\frac{\sigma_n }{h^+}-1}\nonumber\\
&=C_7\Vert u_k\Vert_{\sigma_n}^{\frac{\sigma_n }{h^+}-1},\ \ \text{for all}\ n\in\mathbb{N},
\end{align}
 where $\displaystyle{C_7(n)=\vert\Omega\vert^{1-\frac{1}{h^+}+\frac{1}{\sigma_n}}}$. \\
By H\"older’s inequality (with exponents $h^+$ and $(h^+)^{'}=h^+/(h^+-1)$) and the embedding $L^{h^+}(\Omega)\hookrightarrow L^{\frac{h^+(g^--1)}{h^+-1}}(\Omega)$ (since $g^-< h^+$), we infer that
\begin{align}\label{455169965}
\int_{\Omega} \left|  u_k^{\frac{\theta_n +g^-}{g^-}}\right|^{g^-}{\rm d}x  &=\int_{\Omega}   u_k^{\theta_n +1}  u_k^{g^--1}{\rm d}x\nonumber\\
&\leq \int_{\Omega}   u_k^{\theta_n +1}  u^{g^--1}{\rm d}x\  (\text{since}\ u_k\leq u,\ \text{for all}\ k\geq 1)\nonumber\\
& \leq \left( \int_{\Omega} u^{\frac{h^+(g^--1)}{h^+-1}}  {\rm d}x\right)^{\frac{h^+-1}{h^+}}\left( \int_{\Omega}u_k^{(\theta_n+1)h^+}{\rm d}x\right)^{\frac{1}{h^+}}\nonumber\\
& \leq \vert \Omega\vert^{\frac{h^+-g^-}{h^+}}\Vert u\Vert_{h^+}^{g^--1} \Vert u_k\Vert_{\sigma_n}^{\frac{\sigma_n}{h^+}}\nonumber\\
& =C_8\Vert u_k\Vert_{\sigma_n}^{\frac{\sigma_n}{h^+}},\ \ \text{for all}\ n\in\mathbb{N},
\end{align}
 where $\displaystyle{C_8=\vert \Omega\vert^{\frac{h^+-g^-}{h^+}}\Vert u\Vert_{h^+}^{g^--1}}$.\\
Putting together $(\ref{588})$, $(\ref{7888})$, $(\ref{888})$ and $(\ref{455169965})$, we find that
\begin{align}\label{988}
\int_{\Omega} \left| \nabla u_k^{\frac{\theta_n +g^-}{g^-}}\right|^{g^-}{\rm d}x+\int_{\Omega} \left|  u_k^{\frac{\theta_n +g^-}{g^-}}\right|^{g^-}{\rm d}x & \leq \left( \theta_n+1\right)^{g^-}\left[ \left( \frac{C_6}{(\theta_n +1)G(1)}+C_8\right) \Vert u_k\Vert_{\sigma_n}^{\frac{\sigma_n }{h^+}}+C_7\Vert u_k\Vert_{\sigma_n}^{\frac{\sigma_n }{h^+}-1}\right]\nonumber\\
& \leq \left( \theta_n+1\right)^{g^-}\left[ \left( C_6+C_8\right) \Vert u_k\Vert_{\sigma_n}^{\frac{\sigma_n }{h^+}}+C_7\Vert u_k\Vert_{\sigma_n}^{\frac{\sigma_n }{h^+}-1}\right],\ \ (\text{since}\ (\theta_n +1)G(1)\geq 1)
\end{align}
for all $n\in\mathbb{N}$. 
Since $g^-<\widehat{g}=\frac{Ng^-}{N-g^-}=g^-_*$, then the embedding $W^{1,G}(\Omega)\hookrightarrow W^{1,g^-}(\Omega)\hookrightarrow L^{\widehat{g}}(\Omega)$ are continuous. Moreover,  there is $C_9>0$ such that
\begin{equation}\label{1088}
\left\lVert u_k^{\frac{\theta_n +g^-}{g^-}}\right\rVert_{\widehat{g}}^{g^-}\leq C_9\left\lVert  u_k^{\frac{\theta_n +g^-}{g^-}}\right\rVert_{W^{1,g^-}(\Omega)}^{g^-},\ \ \text{for all}\ n\in\mathbb{N}.
\end{equation}
From $(\ref{988})$ and $(\ref{1088})$, we obtain
\begin{align}\label{7777775}
\left\lVert u_k^{\frac{\theta_n +g^-}{g^-}}\right\rVert_{\widehat{g}}^{g^-}\leq C_9\left( \theta_n+1\right)^{g^-}\left[ \left( C_6+C_8\right) \Vert u_k\Vert_{\sigma_n}^{\frac{\sigma_n }{h^+}}+C_7\Vert u_k\Vert_{\sigma_n}^{\frac{\sigma_n }{h^+}-1}\right]
\end{align}
for all $n\in\mathbb{N}$. From the definition of the sequence $\lbrace\sigma_n\rbrace_{n\in\mathbb{N}}$, we have $\displaystyle{\frac{\sigma_{n+1}}{\widehat{g}}=\frac{\theta_{n}+g^-}{g^-}}$.\\
It follows, by $(\ref{7777775})$, that
\begin{equation}\label{118888}
\Vert u_k\Vert^{\sigma_{n+1}\frac{g^-}{\widehat{g}}}_{\sigma_{n+1}}\leq \left( \theta_n+1\right)^{g^-}C_9\left[ \left( C_6+C_8\right) \Vert u_k\Vert_{\sigma_n}^{\frac{\sigma_n }{h^+}}+C_7\Vert u_k\Vert_{\sigma_n}^{\frac{\sigma_n }{h^+}-1}\right]
\end{equation}
for all $n\in\mathbb{N}$.\\
Let $k\longrightarrow+\infty$ in $(\ref{118888})$, and using the monotone convergence theorem  , we get
\begin{equation}\label{1188}
\Vert u\Vert^{\sigma_{n+1}\frac{g^-}{\widehat{g}}}_{\sigma_{n+1}}\leq \left( \theta_n+1\right)^{g^-}C_9\left[ \left( C_6+C_8\right) \Vert u\Vert_{\sigma_n}^{\frac{\sigma_n }{h^+}}+C_7\Vert u\Vert_{\sigma_n}^{\frac{\sigma_n }{h^+}-1}\right]
\end{equation}
 We distinguish two cases.\\
\textbf{Case 1:} If $\left\lbrace n\in\mathbb{N},\ \Vert u\Vert_{\sigma_n}\leq 1\right\rbrace $ is unbounded. Then, without loss of generality, we may assume that
\begin{equation}\label{746654548}
\Vert u\Vert_{\sigma_n}\leq 1,\ \ \text{for all}\ n\in \mathbb{N}.
\end{equation}
Hence,
$$\Vert u\Vert_{\infty} \leq 1$$
since, $ \sigma_n\longrightarrow +\infty$ as $n\longrightarrow +\infty$ and $u\in L^s(\Omega)$ for all $s\geq 1$.
So, we are done with $M=1$.\\
\textbf{Case 2:} If $\left\lbrace n\in\mathbb{N},\ \Vert u\Vert_{\sigma_n}\leq 1\right\rbrace $ is bounded. Then, without loss of generality, we can suppose that
\begin{equation}\label{74655654548}
\Vert u\Vert_{\sigma_n}> 1,\ \ \text{for all}\ n\in \mathbb{N}.
\end{equation}
 From $(\ref{1188})$ and $(\ref{74655654548})$, we find that
\begin{equation}\label{555559}
\Vert u\Vert^{\sigma_{n+1}\frac{g^-}{\widehat{g}}}_{\sigma_{n+1}}\leq  C_{10}\Vert u\Vert_{\sigma_n}^{\frac{\sigma_n}{h^+}},\ \ \text{for all}\ n\in\mathbb{N}
\end{equation}
where $C_{10}(n)=\left( \theta_n+1\right)^{g^-} C_9 \left( C_6+C_7 +C_8\right)$.\\

We want to remark that
\begin{align}\label{8945666}
C_9\left( C_6+C_7+C_8\right) & =C_9\left[ c_2\left((1+h(1))\vert \Omega\vert^{1-\frac{1}{h^+}}+ h(1) \Vert u\Vert_{h^+}^{h^+-1}\right)+\vert \Omega\vert^{\frac{h^+-g^-}{h^+}}\Vert u\Vert_{h^+}^{g^--1}+\vert\Omega\vert^{1-\frac{1}{h^+}+\frac{1}{\sigma_n}}\right]\nonumber\\
& \leq  C_9\left[ c_2\left((1+h(1))\vert \Omega\vert^{1-\frac{1}{h^+}}+  h(1)\Vert u\Vert_{h^+}^{h^+-1}\right)+\vert \Omega\vert^{\frac{h^+-g^-}{h^+}}\Vert u\Vert_{h^+}^{g^--1}+\vert\Omega\vert+1\right]\nonumber\\
& =C_{11},\ \ \text{for all}\ n\in \mathbb{N}.
\end{align}
Hence $C_{11}> 0$ is independent of $n$. Moreover, we have that
\begin{equation}\label{9564872}
(\theta_n+1)^{g^-}=(\frac{\sigma_n}{h^+})^{g^-}\leq (\sigma_n)^{g^-}\leq (\sigma_{n+1})^{g^-},\ \ \text{for all}\ n\in\mathbb{N}.
\end{equation}
From $(\ref{555559})$, $(\ref{8945666})$ and $(\ref{9564872})$, we obtain
\begin{equation}\label{14156699}
\Vert u\Vert^{\sigma_{n+1}\frac{g^-}{\widehat{g}}}_{\sigma_{n+1}}\leq (\sigma_{n+1})^{g^-} C_{11}\Vert u\Vert_{\sigma_n}^{\frac{\sigma_n}{h^+}},\ \ \text{for all}\ n\in\mathbb{N}.
\end{equation}
Therefore, from \cite[Theorem 6.2.6, p. 737]{npl}, we find that
\begin{equation}  \label{688888}
\Vert u\Vert_{\sigma_{n+1}}\leq M,\ \ \text{for all}\ n\in \mathbb{N}
\end{equation}
for some $M\left( c_2, h(1), g^-, \vert \Omega\vert, \Vert u \Vert_{h^+}\right)\geq 0$.\\
On the other hand, by the hypotheses of the proposition, we have that
\begin{equation}  \label{688}
u\in L^s(\Omega),\ \ \text{for all}\ 1\leq s<\infty.
\end{equation}
Exploiting $(\ref{688888})$,$(\ref{688})$ and the fact that $\sigma_n\longrightarrow +\infty$ as $n\longrightarrow +\infty$, we deduce that
$$\Vert u\Vert_\infty\leq M.$$
This en{\rm d}s the proof.
\end{proof}
\begin{proof}[\textbf{Proof of Theorem \ref{thmC11}}]
Let
$$
\begin{array}{ll}
\mathcal{A}(x,\eta)=a(\vert \eta \vert)\eta, & \text{for all}\ x\in \Omega\ \text{and}\ \eta \in \mathbb{R}^N\\
\ & \ \\
\mathcal{B}(x,t)=f(x,t),& \text{for all}\ x\in \Omega\ \text{and}\  t\in \mathbb{R}\\
\ & \ \\
\psi(x,t)=b(x)\vert t\vert^{p-2}t,& \text{for all}\ x\in \partial\Omega\ \text{and}\  t\in \mathbb{R}
\end{array}
$$
in problem $(\ref{A})$.
Then, $\mathcal{A},\mathcal{B}$ and $\psi$ satisfy the growth conditions $(\ref{7546999882})-(\ref{7546999882222})$ and the problem $(\ref{A})$ turns to  $(\ref{P})$.
By the Propositions \ref{prop1} and \ref{prop2}, we conclude that every weak solution $u\in W^{1,G}(\Omega)$ of problem $(\ref{P})$ belongs to $L^{\infty}(\Omega)$ and $\Vert u\Vert_{\infty}\leq M=M(c_2=\Vert \widehat{a}\Vert_{\infty}, h(1), g^-,\vert \Omega\vert, \Vert u \Vert_{h^+})$. This en{\rm d}s the proof.
\end{proof}
\section{$W^{1,G}(\Omega)$ versus $C^1(\overline{\Omega})$ local minimizers}

In this section, using the regularity theory of Lieberman \cite{2}, we extend the result of Brezis and Nirenberg’s  \cite{29} to the problem $(\ref{P})$.

\begin{prop}\label{prop3}
let $u_0\in W^{1,G}(\Omega)$ be a local $C^1(\overline{\Omega})$-minimizer of $J$ (see Definition \ref{def333}), then $u_0$ is a weak solution for problem  $(\ref{P})$ and $u_0\in C^{1,\alpha}(\overline{\Omega})$, for some $\alpha\in (0,1)$.
\end{prop}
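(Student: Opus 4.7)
The plan is to first upgrade the local minimality hypothesis to a genuine critical-point condition for $J$, read off the weak-solution identity, and then invoke Theorem \ref{thmC11} together with Lieberman's boundary regularity theorem.

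\emph{Step 1: The directional derivatives of $J$ at $u_0$ vanish.} Pick any $v\in C^1(\overline{\Omega})$. For $|t|$ small enough one has $\|tv\|_{C^1(\overline{\Omega})}\le r_0$, so Definition \ref{def333} yields $J(u_0)\le J(u_0+tv)$. The $\triangle_2$-condition on both $G$ and $\tilde G$, together with the growth bounds in $(g_1)$--$(g_3)$ and $(H)$, ensures that $J\in C^1(W^{1,G}(\Omega),\mathbb R)$; hence the real function $t\mapsto J(u_0+tv)$ is $C^1$ with a local minimum at $t=0$, and $\langle J'(u_0),v\rangle=0$ for every $v\in C^1(\overline{\Omega})$.

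\emph{Step 2: Density extension to $W^{1,G}(\Omega)$.} Since $G$ and $\tilde G$ satisfy the $\triangle_2$-condition, $C^1(\overline{\Omega})$ is dense in $W^{1,G}(\Omega)$; combined with continuity of $J'$, the identity of Step 1 extends to every $v\in W^{1,G}(\Omega)$. Computing $J'$ directly from \eqref{eq9870} gives
\begin{equation*}
\int_{\Omega} a(|\nabla u_0|)\nabla u_0\cdot\nabla v\,dx+\int_{\partial\Omega} b(x)|u_0|^{p-2}u_0\,v\,d\gamma=\int_{\Omega} f(x,u_0)\,v\,dx
\end{equation*}
for all $v\in W^{1,G}(\Omega)$, which is exactly the weak-solution identity \eqref{O.O}. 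Thus $u_0$ solves $(\ref{P})$.

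\emph{Step 3: Boundedness and $C^{1,\alpha}$-regularity.} Since $u_0$ is now a weak solution of $(\ref{P})$, Theorem \ref{thmC11} gives $u_0\in L^{\infty}(\Omega)$, and hypothesis $(H)$ with Remark \ref{rem78} then gives $f(\cdot,u_0)\in L^{\infty}(\Omega)$. The structural assumptions $(g_1)$--$(g_3)$ are exactly those needed to apply the Orlicz boundary regularity theorem of Lieberman \cite{2}, while the boundary data fit into its framework because $\partial\Omega$ is $C^2$, $b\in C^{1,\gamma}(\partial\Omega)$, and $\inf_{\partial\Omega}b>0$. Hence $u_0\in C^{1,\alpha}(\overline{\Omega})$ for some $\alpha\in(0,1)$.

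\emph{Main obstacle.} The most delicate point is the invocation of Lieberman's theorem in Step 3: one must check that the Robin boundary operator $a(|\nabla u|)\frac{\partial u}{d\nu}+b(x)|u|^{p-2}u$, once $u_0$ is known to be $L^{\infty}$, fits the ellipticity-plus-monotonicity conditions at the boundary required in \cite{2}. The strict positivity and $C^{1,\gamma}$-smoothness of $b$ plus the $C^2$-regularity of $\partial\Omega$ are exactly what is needed there, but matching notations is the genuinely non-routine step. A secondary, but easier, subtlety is the $C^1$-character of $J$ used in Step 1: the boundary integrand requires the compact trace embedding furnished by Theorem \ref{rem5}, and the reaction term requires the subcritical growth in $(H)$ together with Lemma \ref{lem12}.
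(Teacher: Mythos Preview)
Your proposal is correct and follows essentially the same route as the paper: local $C^1$-minimality gives $\langle J'(u_0),v\rangle=0$ on $C^1(\overline{\Omega})$, density extends this to $W^{1,G}(\Omega)$ so $u_0$ is a weak solution, Theorem \ref{thmC11} yields $u_0\in L^\infty(\Omega)$, and Lieberman's regularity theorem finishes. The only substantive difference is that the paper carries out in full the verification you flag as the ``main obstacle'': it explicitly computes $D_\eta A(x,\eta)$ for $A(x,\eta)=a(|\eta|)\eta$ and checks the ellipticity and growth conditions \eqref{159}--\eqref{1599999} required by \cite{2}, using Lemma \ref{lem8956}, before invoking Lieberman.
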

\begin{proof}[Proof]
By hypothesis $u_0$ is a local $C^1(\overline{\Omega})$-minimizer of $J$, for every $v\in C^1(\overline{\Omega})$ and  $t>0$ small enough, we have $J(u_0)\leq J(u_0+tv)$. Hence,
\begin{equation}\label{20}
0\leq \langle J^{'}(u_0),v\rangle\ \ \text{for all}\ v\in C^1(\overline{\Omega}).
\end{equation}
Since $C^1(\overline{\Omega})$ is dense in $W^{1,G}(\Omega)$, from $(\ref{20})$ we infer that $J^{'}(u_0)=0$. Namely,
\begin{align}\label{21}
\int_{\Omega} a(\vert\nabla u_0\vert)\nabla u_0.\nabla v {\rm d}x +\int_{\partial\Omega}b(x)\vert u_0\vert^{p-2}u_0 v {\rm d}\gamma
 & =\int_{\Omega}f(x,u_0)v{\rm d}x,\ \ \text{for all}\ v\in W^{1,G}(\Omega).
\end{align}

By the nonlinear Green's identity, we get
\begin{align}\label{22}
\int_{\Omega} a(\vert\nabla u_0\vert)\nabla u_0.\nabla v {\rm d}x=-\int_{\Omega} \text{div}(a(\vert\nabla u_0\vert)\nabla u_0). v {\rm d}x+ \int_{\partial\Omega}a(\vert \nabla u_0\vert)\nabla u_0. \nu \ v {\rm d}\gamma,
\end{align}
$\text{for all}\ v\in W^{1,G}(\Omega)$.
It follows that,
\begin{align}\label{23}
\int_{\Omega} a(\vert\nabla u_0\vert)\nabla u_0.\nabla v {\rm d}x=-\int_{\Omega} \text{div}(a(\vert\nabla u_0\vert)\nabla u_0). v {\rm d}x,\ \ \text{for all}\ v\in W^{1,G}_0(\Omega).
\end{align}
Hence, by $(\ref{21})$
\begin{align*}
 -\int_{\Omega} \text{div}(a(\vert\nabla u_0\vert)\nabla u_0). v {\rm d}x  =\int_{\Omega}f(x,u_0)v{\rm d}x,\ \ \text{for all}\ v\in W^{1,G}_0(\Omega),
\end{align*}
which gives,
\begin{align}\label{24}
& -\text{div}(a(\vert\nabla u_0(x)\vert)\nabla u_0(x))
 =f(x,u_0(x)),\ \ \text{for almost}\ x\in \Omega.
\end{align}
From $(\ref{21})$, $(\ref{22})$ and $(\ref{24})$, we obtain
\begin{equation}\label{25}
\left\langle a(\vert \nabla u_0\vert)\frac{\partial u_0}{\partial \nu}+b(x)\vert u_0\vert^{p-2}u_0 ,v\right\rangle_{\partial\Omega}=0\ \ \text{for all}\ v\in W^{1,G}(\Omega).
\end{equation}
It follows that
$$a(\vert \nabla u_0\vert)\frac{\partial u_0}{\partial \nu}+b(x)\vert u_0\vert^{p-2}u_0=0\ \ \text{on}\ \partial\Omega.$$
So, $u_0\in W^{1,G}(\Omega)$ is a weak solution for the  problem $(\ref{P})$.
From Theorem \ref{thmC11}, we have that $u_0\in L^{\infty}(\Omega)$.\\

We define $A:\overline{\Omega}\times\mathbb{R}^N\rightarrow\mathbb{R}^N$, $B:\overline{\Omega}\times\mathbb{R}\rightarrow\mathbb{R}$ and $\phi:\partial\Omega\times\mathbb{R}\rightarrow\mathbb{R}$ by
\begin{equation}\label{789}
\left\lbrace
\begin{array}{ll}
A(x,\eta)=a(\vert \eta\vert)\eta; \\
\ \\
B(x,t)=f(x,t);\\
\ \\
\phi(x,t)=b(x)\vert t\vert^{p-2}t.
\end{array}
\right.
\end{equation}
It is easy to show that, for $x,y\in\overline{\Omega},\  \eta \in \mathbb{R}^N\setminus\lbrace 0\rbrace,\ \xi\in\mathbb{R}^N,\ t\in \mathbb{R},$ the following estimations hold:
\begin{equation}\label{159}
 A(x,0)=0,
\end{equation}
\begin{equation}\label{1599}
 \sum_{i,j=1}^N \frac{\partial (A)_j}{\partial\eta_i}(x,\eta)\xi_i\xi_j \geq \frac{g(\vert \eta \vert)}{\vert \eta \vert}\vert\xi\vert^2,
\end{equation}
\begin{equation}\label{15999}
\sum_{i,j=1}^N \left|\frac{\partial (A)_j}{\partial\eta_i}(x,\eta) \right|\vert \eta\vert\leq c(1+g(\vert \eta\vert)),
\end{equation}
\begin{equation}\label{159999}
\vert A(x,\eta)-A(y,\eta)\vert\leq c(1+g(\vert \eta\vert))(\vert x-y\vert^\theta),\ \text{for some}\ \theta \in (0,1),
\end{equation}
\begin{equation}\label{1599999}
\vert B(x,t)\vert \leq c\left( 1+h(\vert t\vert)\right).
\end{equation}
Indeed: inequalities $(\ref{159})$ , $(\ref{159999})$  and $(\ref{1599999})$ are evident.\\
For $x\in\overline{\Omega},\  \eta \in \mathbb{R}^N\backslash\lbrace 0\rbrace,\ \xi\in\mathbb{R}^N$, we have
\begin{equation}\label{12254996}
D_\eta(A(x,\eta))\xi=a(\vert \eta\vert)\xi +a^{'}(\vert \eta\vert)\frac{\langle\eta ,\xi\rangle_{\mathbb{R}^N}}{\vert  \eta\vert}\eta
\end{equation}
and
\begin{equation}\label{122484996}
\langle D_\eta(A(x,\eta))\xi,\xi\rangle_{\mathbb{R}^N}=a(\vert \eta\vert)\langle\xi,\xi\rangle_{\mathbb{R}^N} +a^{'}(\vert \eta\vert)\frac{\left[ \langle\eta ,\xi\rangle_{\mathbb{R}^N}\right]^2 }{\vert  \eta\vert}
\end{equation}
where $\langle,\rangle_{\mathbb{R}^N}$ is the inner product in $\mathbb{R}^N$. Hence, we have
 the following derivative
\begin{align}\label{666}
D_\eta (a(\vert \eta\vert)\eta)=\frac{a^{'}(\vert \eta\vert)}{\vert \eta\vert}\eta\eta^{T}+a(\vert \eta\vert)I_N & =a(\vert \eta\vert)\left( I_N+\frac{a^{'}(\vert \eta\vert)\vert \eta\vert }{a(\vert\eta\vert)}\frac{1}{\vert \eta\vert^2}M_N(\eta,\eta)\right)
\end{align}
for all $\eta\in \mathbb{R}^N\backslash\lbrace 0\rbrace$, where $\eta^T$ is the transpose of $\eta$, $I_N$ is the unit matrix in $M_N(\mathbb{R})$ and
\begin{equation}\label{455666699}
M_N(\eta,\eta)=\eta \eta^{T}=\begin{pmatrix}
    \eta_1^2 & \eta_1\eta_2 &\cdots & \eta_1\eta_N \\
    \eta_2\eta_1 & \eta_2^2 &\cdots & \eta_2\eta_N\\
    \vdots & \vdots& \ddots& \vdots\\
    \eta_N\eta_1& \eta_N\eta_2 & \cdots & \eta_N^2\\
    \end{pmatrix}
    \quad
\end{equation}
for all $\eta\in\mathbb{R}^N$.\\
Note that, for all $\eta\in \mathbb{R}^N$, we have
\begin{equation}\label{4546789123}
\lVert M_N(\eta,\eta)\rVert_{\mathbb{R}^N}=\sum_{i,j=1}^{N}\vert \eta_i \eta_j\vert=\left( \sum_{i=1}^{N}\vert \eta_i\vert\right) ^{2}\leq N\sum_{i=1}^{N}\vert \eta_i\vert^2=N\vert \eta\vert^2
\end{equation}
where $\Vert .\Vert_{\mathbb{R}^N}$ is a norm on $M_N(\mathbb{R})$.\\
From $(\ref{122484996})$ and assumption $(g_3)$, we have
\begin{align}\label{954621866}
\sum_{i,j=1}^N \frac{\partial (A)_j}{\partial\eta_i}(x,\eta)\xi_i\xi_j & =\langle D_\eta(A(x,\eta))\xi,\xi\rangle\nonumber\\
& =a(\vert \eta\vert)\langle\xi,\xi\rangle_{\mathbb{R}^N} +a^{'}(\vert \eta\vert)\frac{\left[ \langle\eta ,\xi\rangle_{\mathbb{R}^N}\right]^2 }{\vert  \eta\vert}\nonumber\\
& =a(\vert \eta\vert)\left[ \langle\xi,\xi\rangle_{\mathbb{R}^N} +\frac{a^{'}(\vert \eta\vert)\vert  \eta\vert}{a(\vert \eta\vert)}\frac{\left[ \langle\eta,\xi\rangle_{\mathbb{R}^N}\right] ^2}{\vert  \eta\vert^2}\right] \nonumber\\
& \geq \frac{g(\vert \eta\vert)}{\vert \eta \vert}\vert \xi\vert^2
\end{align}
for all $x\in\overline{\Omega},\  \eta \in \mathbb{R}^N\backslash\lbrace 0\rbrace,\ \xi\in\mathbb{R}^N$.\\
Moreover, from  $(\ref{666})$, $(\ref{4546789123})$  and assumption $(g_3)$, we find that
\begin{align}\label{455645489}
\sum_{i,j=1}^N \left|\frac{\partial (A)_j}{\partial\eta_i}(x,\eta) \right|\vert \eta\vert &=\lVert D_\eta(A(x,\eta))\rVert_{\mathbb{R}^N}\vert \eta\vert\nonumber\\
&\leq \left( \lVert I_N\rVert_{\mathbb{R}^N}+\frac{a^{'}(\vert \eta\vert)\vert \eta\vert }{a(\vert\eta\vert)}\frac{1}{\vert \eta\vert^2}\lVert M_N(\eta,\eta)\rVert_{\mathbb{R}^N}\right)g(\vert\eta \vert)\nonumber\\
&\leq \left( 1+\frac{a^{'}(\vert \eta\vert)\vert \eta\vert }{a(\vert\eta\vert)}\right)Ng(\vert\eta \vert)\nonumber\\
&\leq a^+Ng(\vert\eta \vert)\nonumber\\
&\leq a^+N(1+g(\vert\eta \vert))
\end{align}
for all $x\in\overline{\Omega},\  \eta \in \mathbb{R}^N\backslash\lbrace 0\rbrace$.\\
 The non-linear regularity result of Lieberman \cite[p. 320]{2} implies the existence of $\alpha\in(0,1)$ and $M_0\geq 0$ such that
$$u_0\in C^{1,\alpha}(\overline{\Omega})\ \ \text{and}\ \ \Vert u_0\Vert_{C^{1,\alpha}(\overline{\Omega})}\leq M_0.$$
This en{\rm d}s the proof.
\end{proof}
\begin{prop}\label{prop4}
Under the assumptions $(G)$ and $(H)$, if $u_0\in W^{1,G}(\Omega)$ is a local $C^1(\overline{\Omega})$-minimizer of $J$ (see Definition \ref{def333}), then $u_0\in W^{1,G}(\Omega)$ is also a local $W^{1,G}(\Omega)$-minimizer of $J$ (see Definition \ref{def333}).
\end{prop}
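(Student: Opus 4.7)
The plan is to argue by contradiction, producing a sequence of constrained minimizers of $J$ on shrinking $W^{1,G}$-balls around $u_0$ and then upgrading their regularity uniformly, via Theorem \ref{thmC11} and Lieberman's $C^{1,\alpha}$ estimate \cite{2}, so that they converge to $u_0$ in $C^1(\overline\Omega)$. This is the Orlicz--Sobolev analogue of the scheme in \cite{30,31,mom,3}.

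Suppose the conclusion fails; then for every $n\ge 1$ there exists $v_n\in W^{1,G}(\Omega)$ with $\|v_n-u_0\|\le 1/n$ and $J(v_n)<J(u_0)$. I would consider the constrained problem
$$\inf\bigl\{J(u) : u\in W^{1,G}(\Omega),\ \|u-u_0\|\le 1/n\bigr\}.$$
The ball is weakly compact because $W^{1,G}(\Omega)$ is reflexive (both $G$ and $\tilde G$ satisfy $\Delta_2$), and $J$ is weakly sequentially lower semicontinuous: the Dirichlet term is convex, while the boundary and nonlinear terms are sequentially weakly continuous because the embeddings $W^{1,G}(\Omega)\hookrightarrow L^{p}(\partial\Omega)$ and $W^{1,G}(\Omega)\hookrightarrow L^{h^+}(\Omega)$ supplied by Theorem \ref{rem5} are compact together with the subcritical growth in $(H)$. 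Thus a minimizer $u_n$ exists; since $J(u_n)\le J(v_n)<J(u_0)$, we have $u_n\ne u_0$ and $u_n\to u_0$ strongly in $W^{1,G}(\Omega)$.

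Next I would derive a weak equation for $u_n$. Replacing the Luxemburg constraint by the equivalent modular constraint $\mathcal{K}(u-u_0)\le \rho_n$ with $\rho_n=\mathcal{K}(u_n-u_0)\to 0$ (preferable because $\mathcal{K}$ is $C^1$ on $W^{1,G}(\Omega)$ under $\Delta_2$), a Lagrange multiplier $\mu_n\ge 0$ yields that $u_n$ weakly solves a perturbed problem of the form
\begin{align*}
-\divv\!\bigl(a(|\nabla u_n|)\nabla u_n\bigr)
&-\mu_n\,\divv\!\bigl(a(|\nabla(u_n-u_0)|)\nabla(u_n-u_0)\bigr)\\
&+\mu_n\,a(|u_n-u_0|)(u_n-u_0)=f(x,u_n)\quad\text{in }\Omega,
\end{align*}
coupled with the Robin condition (and $\mu_n=0$ when the constraint is inactive, in which case $J'(u_n)=0$). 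The principal part is a sum of two $g$-Laplacian-type operators and so still satisfies the monotonicity of Lemma \ref{lem444} and the structural hypotheses of Proposition \ref{prop3}; Theorem \ref{thmC11} therefore produces $u_n\in L^\infty(\Omega)$ with a bound depending only on $\|\widehat a\|_\infty$, $h(1)$, $g^-$, $|\Omega|$ and $\|u_n\|_{h^+}$, and the latter is uniformly controlled because $\|u_n\|\le\|u_0\|+1$. Lieberman's regularity theorem \cite{2}, whose hypotheses $(\ref{159})$--$(\ref{1599999})$ were verified for the $g$-Laplacian in the proof of Proposition \ref{prop3}, then supplies $\beta\in(0,1)$ and $M_1>0$, both independent of $n$, with $\|u_n\|_{C^{1,\beta}(\overline\Omega)}\le M_1$.

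By the compact embedding $C^{1,\beta}(\overline\Omega)\hookrightarrow C^1(\overline\Omega)$, a subsequence converges in $C^1(\overline\Omega)$, and uniqueness of the $W^{1,G}$-limit forces the limit to be $u_0$, so $u_n\to u_0$ in $C^1(\overline\Omega)$. For $n$ large we then have $\|u_n-u_0\|_{C^1(\overline\Omega)}\le r_0$, and the local $C^1$-minimality of $u_0$ (Definition \ref{def333}) gives $J(u_0)\le J(u_n)$, contradicting $J(u_n)<J(u_0)$. The main obstacle is the uniform-in-$n$ regularity step: the perturbed operator has coefficients depending on $\mu_n$ and, through $\nabla u_0$, explicitly on $x$, so one must bound $\mu_n$ (e.g.\ by replacing the constrained problem with a penalization $J(u)+\lambda\,(\mathcal{K}(u-u_0)-1/n)_+^{g^+}$ that removes the multiplier altogether) and then verify that the composite operator still satisfies Lieberman's ellipticity, growth and Hölder conditions with constants independent of $n$ --- the latter relying on the $C^{1,\alpha}(\overline\Omega)$-regularity of $u_0$ already established in Proposition \ref{prop3}.
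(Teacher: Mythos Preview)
Your overall scheme---contradiction, constrained minimization with a modular ball, Lagrange multiplier, uniform $C^{1,\alpha}$ bounds via $L^\infty$ plus Lieberman, compact embedding into $C^1$---is exactly the paper's approach. The gap is precisely where you locate it yourself: you do not control the multiplier $\mu_n$, and without such control the structural constants in both the $L^\infty$ estimate and Lieberman's theorem depend on $n$, so the $C^{1,\beta}$ bound is not uniform and the compactness argument collapses. Your two proposed fixes (bounding $\mu_n$ directly, or switching to a penalized functional) are plausible in spirit but neither is carried out, and in fact neither is what the paper does.

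The paper's device is a trichotomy on the multiplier. If $\lambda_\varepsilon=0$, the minimizer solves \eqref{P} itself and Proposition~\ref{prop3} applies. If $0<\lambda_\varepsilon\le 1$, the perturbed operator $A_\varepsilon(x,\eta)=a(|\eta|)\eta+\lambda_\varepsilon a(|\eta-\nabla u_0|)(\eta-\nabla u_0)+\lambda_\varepsilon a(|\nabla u_0|)\nabla u_0$ has structural constants independent of $\lambda_\varepsilon$ because $\lambda_\varepsilon\le 1$. The crucial case is $\lambda_\varepsilon>1$: here one does \emph{not} try to bound $\lambda_\varepsilon$. Instead one sets $y_\varepsilon=v_\varepsilon-u_0$, subtracts the Euler equation for $u_0$, and \emph{divides} the whole identity by $\lambda_\varepsilon$. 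The resulting operator
\[
\tilde A_\varepsilon(x,\eta)=a(|\eta|)\eta+\tfrac{1}{\lambda_\varepsilon}\bigl[a(|\eta+\nabla u_0|)(\eta+\nabla u_0)-a(|\nabla u_0|)\nabla u_0\bigr]
\]
has $1/\lambda_\varepsilon\in(0,1)$ as coefficient of the perturbation, so again the structural constants are uniform; Propositions~\ref{prop1}--\ref{prop2} (for the general problem \eqref{A}, not Theorem~\ref{thmC11} which is specific to \eqref{P}) give $y_\varepsilon\in L^\infty$ and Lieberman gives $y_\varepsilon\in C^{1,\alpha}$ with a uniform bound. Since $u_0\in C^{1,\alpha}$ by Proposition~\ref{prop3}, the same follows for $v_\varepsilon$. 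This change of unknown together with division by the multiplier is the missing idea in your argument.
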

\begin{proof}[Proof]Let $u_0$ be a local $C^1(\overline{\Omega})$-minimizer of $J$, then, by Proposition $\ref{prop3}$, we have
\begin{equation}
u_0\in L^{\infty}(\Omega)\ \text{and}\ u_0\in C^{1,\alpha}(\overline{\Omega})\ \ \text{for some}\ \alpha\in(0,1).
\end{equation}
To prove that $u_0$ is a local $W^{1,G}(\Omega)$-minimizer of $J$, we argue by contradiction. Suppose that $u_0$ is not a local $W^{1,G}(\Omega)$-minimizer of $J$. Let $\varepsilon\in(0,1)$ and define
$$B(u_0,\varepsilon)=\left\lbrace v\in W^{1,G}(\Omega):\ \mathcal{K}(v-u_0)\leq \varepsilon\right\rbrace, $$
recall that $\displaystyle{\mathcal{K}(v-u_0)=\int_{\Omega}G(\vert \nabla(v-u_0)\vert){\rm d}x+\int_{\Omega}G(\vert v-u_0\vert){\rm d}x}$.\\ We consider the following minimization problem:
\begin{equation}\label{26}
m_\varepsilon =\inf \left\lbrace J(v):\ v\in B(u_0,\varepsilon)\right\rbrace.
\end{equation}
By the hypothesis of contradiction and assumption $(H)$, we have
\begin{equation}\label{27}
-\infty<m_\varepsilon <J(u_0).
\end{equation}
The set $B(u_0,\varepsilon)$ is bounded, closed and convex subset of $W^{1,G}(\Omega)$ and is a neighbourhood of $u_0\in W^{1,G}(\Omega)$. Since $f(x,t)$ satisfies the assumption $(H)$, the functional $J:W^{1,G}(\Omega)\rightarrow\mathbb{R}$ is weakly lower semicontinuous. So, From the Weierstrass theorem  there exist $v_\varepsilon\in B(u_0,\varepsilon)$ such that $m_\varepsilon=J(v_\varepsilon)$. Moreover, by $(\ref{27})$, we deduce that $v_\varepsilon\neq 0$.\\
Now, using the Lagrange multiplier rule \cite[p. 35]{13}, we can find $\lambda_\varepsilon\geq 0$ such that
$$\langle J^{'}(v_\varepsilon),v\rangle+\lambda_\varepsilon\langle \mathcal{K}^{'}(v_\varepsilon-u_0),v\rangle=0\ \ \text{for all}\ v\in W^{1,G}(\Omega),$$
which implies
\begin{align}\label{29}
\langle J^{'}(v_\varepsilon),v\rangle+\lambda_\varepsilon\langle \mathcal{K}^{'}(v_\varepsilon-u_0),v\rangle & =\int_{\Omega} a(\vert\nabla v_\varepsilon\vert)\nabla v_\varepsilon.\nabla v {\rm d}x +\int_{\partial\Omega}b(x)\vert v_\varepsilon\vert^{p-2}v_\varepsilon v {\rm d}\gamma\nonumber\\
 &+\lambda_\varepsilon\int_{\Omega} a(\vert\nabla( v_\varepsilon-u_0)\vert)\nabla (v_\varepsilon-u_0).\nabla v {\rm d}x -\int_{\Omega}f(x, v_\varepsilon)v{\rm d}x\nonumber\\
& +\lambda_\varepsilon\int_{\Omega} a(\vert v_\varepsilon-u_0\vert) (v_\varepsilon-u_0) v {\rm d}x\nonumber\\
& =0
\end{align}
for all $v\in W^{1,G}(\Omega)$.\\
In the other side, from Proposition \ref{prop3}, we see that $u_0\in W^{1,G}(\Omega)$ is a weak solution for the problem $(\ref{P})$. Hence,
\begin{align}\label{78956241}
\int_{\Omega} a(\vert\nabla u_0\vert)\nabla u_0.\nabla v {\rm d}x +\int_{\partial\Omega}b(x)\vert u_0\vert^{p-2} u_0 v {\rm d}\gamma-\int_{\Omega}f(x, u_0)v{\rm d}x=0
\end{align}
for all $v\in W^{1,G}(\Omega)$.\\

Next, we have to show that $v_\varepsilon$ belongs to $L^{\infty}(\Omega)$ and hence to $C^{1,\alpha}(\overline{\Omega})$. We distinguish three cases.\\
\textbf{Case 1:} If $\lambda_\varepsilon=0$ with $\varepsilon\in (0,1]$, we find that $v_\varepsilon$ solves the Robin boundary value problem $(\ref{P})$. As in Proposition $\ref{prop3}$, we prove that $v_\varepsilon\in C^{1,\alpha}(\overline{\Omega})$ for some $\alpha\in (0,1)$ and there is $M_1\geq 0$ (independent of $\varepsilon$) such that
$$\Vert v_\varepsilon\Vert_{C^{1,\alpha}(\overline{\Omega})}\leq M_1.$$
\textbf{Case 2:} If $0<\lambda_\varepsilon\leq 1$ with $\varepsilon\in (0,1]$.
Multiplying $(\ref{78956241})$ by $\lambda_\varepsilon>0$ and adding $(\ref{29})$, we get
\begin{align}\label{7458869}
\int_{\Omega} a(\vert\nabla v_\varepsilon\vert)\nabla v_\varepsilon.\nabla v {\rm d}x &+\lambda_\varepsilon\int_{\Omega} a(\vert\nabla u_0\vert)\nabla u_0.\nabla v {\rm d}x
 +\lambda_\varepsilon\int_{\Omega} a(\vert\nabla( v_\varepsilon-u_0)\vert)\nabla (v_\varepsilon-u_0).\nabla v {\rm d}x \nonumber\\
 & +\lambda_\varepsilon\int_{\partial\Omega}b(x)\vert u_0\vert^{p-2} u_0 v {\rm d}\gamma +\int_{\partial\Omega}b(x)\vert v_\varepsilon\vert^{p-2}v_\varepsilon v {\rm d}\gamma\nonumber\\
  &=\lambda_\varepsilon\int_{\Omega}f(x, u_0)v{\rm d}x +\int_{\Omega}f(x, v_\varepsilon)v{\rm d}x -\lambda_\varepsilon\int_{\Omega} a(\vert v_\varepsilon-u_0\vert) (v_\varepsilon-u_0) v {\rm d}x
\end{align}
for all $v\in W^{1,G}(\Omega)$.\\

Let $A_\varepsilon:\overline{\Omega}\times\mathbb{R}^N\rightarrow\mathbb{R}^N$, $B_\varepsilon:\overline{\Omega}\times\mathbb{R}\rightarrow\mathbb{R}$ and $\phi_\varepsilon:\partial\Omega\times\mathbb{R}\rightarrow\mathbb{R}$ defined by
\begin{equation}\label{788}
\left\lbrace
\begin{array}{ll}
A_\varepsilon(x,\eta)=a(\vert \eta \vert)\eta+\lambda_\varepsilon a(\vert \eta -\nabla u_0\vert)(\eta-\nabla u_0)+\lambda_\varepsilon a(\vert \nabla u_0\vert)\nabla u_0;\\
\ \\
B_\varepsilon(x,t)=f(x,t)+\lambda_\varepsilon f(x,u_0)- \lambda_\varepsilon a(\vert t-u_0\vert)(t-u_0);\\
\ \\
\phi_\varepsilon(x,t)=b(x)\left( \vert t\vert^{p-2}t+\lambda_\varepsilon \vert u_0\vert^{p-2}u_0 \right).
\end{array}
\right.
\end{equation}
It is clear that $A_\varepsilon\in C(\overline{\Omega}\times\mathbb{R}^N,\mathbb{R}^N)$.  Hence, the equation $(\ref{7458869})$  is the weak formulation of the following Robin boundary value problem
$$
\left\lbrace
\begin{array}{ll}
-\text{div}(A_\varepsilon(x,\nabla v_\varepsilon)) =B_\varepsilon(x,v_\varepsilon)\ & \text{on}\  \Omega,\\
\ & \ \\
A_\varepsilon(x,\nabla v_\varepsilon).\nu + \phi_\varepsilon(x,v_\varepsilon)=0 & \text{on} \ \partial\Omega,
\end{array}
\right.
$$
where $\nu$ is the inner normal to $\partial\Omega$.\\
 From Lemma \ref{lem444} and assumption $(G)$, for $\eta\in \mathbb{R}^n$ and $x\in\Omega$ , we have
\begin{align}\label{45779756}
\langle A_\varepsilon(x,\eta),\eta\rangle_{\mathbb{R}^N}& =\langle a(\vert \eta\vert)\eta,\eta\rangle_{\mathbb{R}^N}+ \lambda_\varepsilon\langle a(\vert \eta-\nabla u_0\vert)(\eta-\nabla u_0),\eta -\nabla u_0-(-\nabla u_0)\rangle_{\mathbb{R}^N}\nonumber\\
& -\lambda_\varepsilon\langle a(\vert -\nabla u_0\vert)(-\nabla u_0),\eta -\nabla u_0-(-\nabla u_0)\rangle_{\mathbb{R}^N}\nonumber\\
& \geq g(\vert \eta\vert)\vert \eta\vert\nonumber\\
&\geq G(\vert \eta\vert)
\end{align}
and
\begin{align}\label{457779756}
\vert A_\varepsilon(x,\eta)\vert& \leq a(\vert \eta\vert)\vert\eta\vert+ \lambda_\varepsilon a(\vert \eta-\nabla u_0\vert)\vert\eta-\nabla u_0\vert+\lambda_\varepsilon a(\vert \nabla u_0\vert)\vert \nabla u_0\vert\nonumber\\
& \leq g(\vert \eta\vert)+g(\vert \eta-\nabla u_0\vert)+g(\vert \nabla u_0\vert)\ \ (\text{since}\ 0<\lambda_\varepsilon\leq 1)\nonumber\\
&\leq g(\vert \eta\vert)+g(\vert \eta\vert+\vert\nabla u_0\vert)+g(\vert \nabla u_0\vert)\nonumber\\
& \leq c_0g(\vert \eta\vert)+c_1\ (\text{using Lemma \ref{lem1} and the monotonicity of}\ g).
\end{align}
 Then, $A_\varepsilon, B_\varepsilon$ and $\phi_\varepsilon$ satisfy the corresponding growth conditions $(\ref{7546999882})-(\ref{7546999882222})$.
So, using the Propositions \ref{prop1} and \ref{prop2},  we obtain that $v_\varepsilon\in L^{\infty}(\Omega)$.\\
 It remains, using the regularity theorem of Lieberman, to show that $v_\varepsilon\in C^{1,\alpha}(\overline{\Omega})$ for some $\alpha\in (0,1)$. So, we need to prove that $A_\varepsilon$ and $B_\varepsilon$ satisfy the corresponding  $(\ref{159})-(\ref{1599999})$.
The inequalities $(\ref{159})$ and $(\ref{1599999})$ are evident. The inequality $(\ref{159999})$ follows from Lemma \ref{lem3} and the fact that $\nabla u_0$ is H\"older continuous.\\

As in $(\ref{12254996})$ and $(\ref{122484996})$, we have
\begin{equation}\label{1225445996}
D_\eta(a(\vert\eta-\nabla u_0\vert)(\eta-\nabla u_0))\xi=a(\vert \eta-\nabla u_0\vert)\xi +a^{'}(\vert \eta-\nabla u_0\vert)\frac{\langle \eta-\nabla u_0 ,\xi\rangle_{\mathbb{R}^N}}{\vert \eta-\nabla u_0\vert}(\eta-\nabla u_0)
\end{equation}
and
\begin{equation}\label{12254445996}
\langle D_\eta(a(\vert\eta-\nabla u_0\vert)(\eta-\nabla u_0))\xi,\xi\rangle_{\mathbb{R}^N}=a(\vert \eta-\nabla u_0\vert)\langle\xi,\xi\rangle_{\mathbb{R}^N} +a^{'}(\vert \eta-\nabla u_0\vert)\frac{\left[ \langle \eta-\nabla u_0 ,\xi\rangle_{\mathbb{R}^N}\right] ^2}{\vert \eta-\nabla u_0\vert}
\end{equation}
for all $x\in\overline{\Omega},\  \eta \in \mathbb{R}^N\backslash\lbrace \nabla u_0\rbrace,\ \xi\in\mathbb{R}^N$.\\
Exploiting $(\ref{954621866})$, $(\ref{12254445996})$ and assumption $(g_3)$, we infer that
\begin{align}\label{54548998}
\sum_{i,j=1}^N \frac{\partial (A_\varepsilon)_j}{\partial\eta_i}(x,\eta)\xi_i\xi_j
&=\langle D_\eta (A)(x,\eta)\xi,\xi\rangle_{\mathbb{R}^N}\nonumber\\
&+\lambda_\varepsilon a(\vert \eta-\nabla u_0\vert )\left( \langle\xi,\xi\rangle_{\mathbb{R}^N}+\frac{a^{'}(\vert \eta-\nabla u_0\vert )\vert\eta-\nabla u_0\vert}{a(\vert\eta-\nabla u_0\vert)}\frac{\left[ \langle\eta-\nabla u_0,\xi\rangle_{\mathbb{R}^N}\right] ^2}{\vert\eta-\nabla u_0\vert^2}\right)\nonumber\\
& \geq  \langle D_\eta (A)(x,\eta)\xi,\xi\rangle_{\mathbb{R}^N}\nonumber\\
& \geq \frac{g(\vert \eta\vert)}{\vert \eta\vert}\vert\xi\vert^2
\end{align}
for all $x\in\overline{\Omega},\  \eta \in \mathbb{R}^N\backslash\lbrace \nabla u_0\rbrace,\ \xi\in\mathbb{R}^N$.\\
Note that the derivative of $A_\varepsilon$ has the form
\begin{equation}\label{7845661236}
D_\eta (A_\varepsilon(x,\eta))=D_\eta (A(x,\eta))+\lambda_\varepsilon a(\vert \eta-\nabla u_0\vert)\left( I_N+\frac{a^{'}(\vert \eta-\nabla u_0\vert)\vert \eta-\nabla u_0\vert }{a(\vert\eta-\nabla u_0\vert)}\frac{1}{\vert \eta-\nabla u_0\vert^2}M_N(\eta-\nabla u_0,\eta-\nabla u_0)\right)
\end{equation}
for all $x\in\overline{\Omega},\  \eta \in \mathbb{R}^N\backslash\lbrace \nabla u_0\rbrace$, where $M_N(\eta-\nabla u_0,\eta-\nabla u_0)$ is defined in $(\ref{455666699})$.\\
As in $(\ref{4546789123})$, we have
\begin{equation}\label{4545556789123}
\lVert M_N(\eta-\nabla u_0,\eta-\nabla u_0)\rVert_{\mathbb{R}^N}\leq N\vert \eta-\nabla u_0\vert^2.
\end{equation}
In light of  $(\ref{455645489})$, $(\ref{7845661236})$, $(\ref{4545556789123})$ and assumption $(g_3)$, we see that
 \begin{align}\label{7458896323}
 \sum_{i,j=1}^N \left|\frac{\partial (A_\varepsilon)_j}{\partial\eta_i}(x,\eta) \right|\vert \eta\vert& =\lVert D_\eta (A_\varepsilon(x,\eta))\rVert_{\mathbb{R}^N}\vert \eta \vert\nonumber\\
 & \leq a^+N a(\vert\eta \vert)\vert\eta \vert+\lambda_\varepsilon a(\vert \eta-\nabla u_0\vert)\vert \eta\vert \lVert I_N\rVert_{\mathbb{R}^N}\nonumber\\
 & +\lambda_\varepsilon a(\vert \eta-\nabla u_0\vert) \vert\eta\vert\left( \frac{a^{'}(\vert \eta-\nabla u_0\vert)\vert \eta-\nabla u_0\vert }{a(\vert\eta-\nabla u_0\vert)}\frac{\lVert M_N(\eta-\nabla u_0,\eta-\nabla u_0)\rVert_{\mathbb{R}^N}}{\vert \eta-\nabla u_0\vert^2}\right)\nonumber\\
 &\leq a^+N a(\vert\eta \vert)\vert\eta \vert+ \lambda_\varepsilon a^+N a(\vert \eta-\nabla u_0\vert) \vert\eta\vert\nonumber\\
 &\leq a^+N \vert\eta \vert\left( a(\vert\eta \vert)+  a(\vert \eta-\nabla u_0\vert) \right) \nonumber\\
 &\leq c(1+g(\vert \eta\vert))
 \end{align}
for all $x\in\overline{\Omega},\  \eta \in \mathbb{R}^N\backslash\lbrace \nabla u_0\rbrace$.\\
 So, from the regularity theorem of Lieberman \cite[p. 320]{2}, we can find $\alpha\in (0,1)$ and $M_2>0$, both independent from $\varepsilon$, such that
\begin{equation}\label{32}
 v_\varepsilon\in C^{1,\alpha}(\overline{\Omega}),\ \ \Vert v_\varepsilon\Vert_{C^{1,\alpha}(\overline{\Omega})}\leq M_2\ \ \text{for all}\ \varepsilon\in(0,1].
\end{equation}
\textbf{Case 3:} If $1<\lambda_\varepsilon$ with $\varepsilon\in (0,1]$. Multiplying $(\ref{78956241})$ with $-1$, setting $y_\varepsilon=v_\varepsilon-u_0$ in   $(\ref{29})$ and adding, we get
\begin{align}\label{745886669}
\int_{\Omega} a(\vert\nabla( y_\varepsilon+u_0)\vert)\nabla (y_\varepsilon+u_0).\nabla v {\rm d}x &-\int_{\Omega} a(\vert\nabla u_0\vert)\nabla u_0.\nabla v {\rm d}x
 +\lambda_\varepsilon\int_{\Omega} a(\vert\nabla y_\varepsilon\vert)\nabla y_\varepsilon.\nabla v {\rm d}x \nonumber\\
 & -\int_{\partial\Omega}b(x)\vert u_0\vert^{p-2} u_0 v {\rm d}\gamma +\int_{\partial\Omega}b(x)\vert y_\varepsilon+u_0\vert^{p-2}( y_\varepsilon+u_0) v {\rm d}\gamma\nonumber\\
  &=\int_{\Omega}f(x, y_\varepsilon+u_0)v{\rm d}x -\int_{\Omega}f(x, u_0)v{\rm d}x  -\lambda_\varepsilon\int_{\Omega} a(\vert y_\varepsilon\vert) y_\varepsilon v {\rm d}x
\end{align}
for all $v\in W^{1,G}(\Omega)$.\\
Defining again $\tilde{A}_\varepsilon:\overline{\Omega}\times\mathbb{R}^N\rightarrow\mathbb{R}^N$, $\tilde{B}_\varepsilon:\overline{\Omega}\times\mathbb{R}\rightarrow\mathbb{R}$ and $\tilde{\phi}_\varepsilon:\partial\Omega\times\mathbb{R}\rightarrow\mathbb{R}$ by
\begin{equation}\label{789968}
\left\lbrace
\begin{array}{ll}
\tilde{A}_\varepsilon(x,\eta)=a(\vert \eta \vert)\eta+\frac{1}{\lambda_\varepsilon} a(\vert \eta +\nabla u_0\vert)(\eta+\nabla u_0)-\frac{1}{\lambda_\varepsilon} a(\vert \nabla u_0\vert)\nabla u_0; \\
\ \\
\tilde{B}_\varepsilon(x,t)=\frac{1}{\lambda_\varepsilon}\left[ f(x,t+u_0)- f(x,u_0)\right] - a(\vert t\vert)t;\\
\ \\
\tilde{\phi}_\varepsilon(x,t)=\frac{1}{\lambda_\varepsilon}b(x)\left( \vert t+u_0\vert^{p-2}(t+u_0)-\vert u_0\vert^{p-2}u_0\right).
\end{array}
\right.
\end{equation}
It is clear that $A_\varepsilon\in C(\overline{\Omega}\times\mathbb{R}^N,\mathbb{R}^N)$. Rewriting $(\ref{745886669})$, we find the following equation
$$
\left\lbrace
\begin{array}{ll}
-\text{div}(\tilde{A}_\varepsilon(x,\nabla y_\varepsilon)) =\tilde{B}_\varepsilon(x,y_\varepsilon)\ & \text{on}\ \Omega,\\
\ & \ \\
\tilde{A}_\varepsilon(x,\nabla y_\varepsilon).\nu +\tilde{\phi}_\varepsilon(x,y_\varepsilon)=0 & \text{on} \ \partial\Omega,
\end{array}
\right.
$$
where $\nu$ is the inner normal to $\partial\Omega$.\\
Again, from Propositions $\ref{prop1}$ and $\ref{prop2}$, we conclude that $y_\varepsilon\in L^{\infty}(\Omega)$. By the same arguments used in case 2, we prove that $\tilde{A}_\varepsilon$ and $\tilde{B}_\varepsilon$ satisfy the corresponding inequalities $(\ref{159})-(\ref{1599999})$. So, the regularity theorem of Lieberman \cite[p. 320]{2}  implies the existence  of $\alpha\in (0,1)$ and $M_3\geq 0$ both independent of $\varepsilon$ such that
$$y_\varepsilon\in C^{1,\alpha}(\overline{\Omega}),\ \ \text{and}\ \Vert y_\varepsilon\Vert_{C^{1,\alpha}(\overline{\Omega})}\leq M_3.$$
Since $y_\varepsilon=v_\varepsilon-u_0$ and $u_0\in C^{1,\alpha}(\overline{\Omega})$, we infer that

$$v_\varepsilon\in C^{1,\alpha}(\overline{\Omega}),\ \ \text{and}\ \Vert v_\varepsilon\Vert_{C^{1,\alpha}(\overline{\Omega})}\leq M_3.$$

Let $\displaystyle{\varepsilon_n\searrow 0}$ as $n\longrightarrow +\infty$. Therefore, in the three cases, we have the same uniform $C^{1,\alpha}(\overline{\Omega})$ bounds for the sequence $\{v_{\varepsilon_n}\}_{n\geq 1} \subseteq W^{1,G}(\Omega)$. Hence, the Arzel\`{a}-Ascoli theorem guarantees that, up to a subsequence,
\begin{equation}\label{75568784}
v_{\varepsilon_n} \to v \quad \text{in} \quad C^{1}(\overline{\Omega})
\end{equation}
for some $v \in C^{1}(\overline{\Omega})$.\\
Recalling that $\Vert v_{\varepsilon_n}-u_0\Vert^{g^+}\leq \varepsilon_n$, for all $n\in\mathbb{N}$. So,
\begin{equation}\label{74568784}
v_{\varepsilon_n}\longrightarrow u_0\ \text{in}\ W^{1,G}(\Omega).
\end{equation}
Therefore, from $(\ref{75568784})$ and $(\ref{74568784})$, we obtain
 $v_{\varepsilon_n}\rightarrow u_0\ \ \text{in}\ C^{1}(\overline{\Omega}).$
So, for $n$ sufficiently large, say $n\geq n_0$, we have
$\Vert v_{\varepsilon_n}-u_0\Vert_{C^{1}(\overline{\Omega})}\leq r_0$
(where $r_0>0$ is defined in Definition \ref{def333}),
which provides
\begin{equation}\label{misss}
J(u_0)\leq J(v_{\varepsilon_n})\ \ \text{for all}\ n\geq n_0.
\end{equation}
On the other hand, we have
\begin{equation}\label{missss}
J(v_{\varepsilon_n})< J(u_0)\ \ \text{for all}\ n\in\mathbb{N}.
\end{equation}
Comparing $(\ref{misss})$ and $(\ref{missss})$, we reach a contradiction. This proves that $u_0$ is a local $W^{1,G}(\Omega)$-minimizer of $J$.
 This en{\rm d}s the proof.
\end{proof}
\begin{proof}[\textbf{Proof of Theorem \ref{thmC12}:}] The proof follows by applying  Propositions $\ref{prop3}$ and $\ref{prop4}$.
\end{proof}

\subsection*{Acknowledgments}
The research of V. D. R\u{a}dulescu is supported by the grant ``Nonlinear Differential Systems in Applied Sciences" of the Romanian Ministry of Research, Innovation and Digitization, within PNRR-III-C9-2022-I8/22.

\subsection*{Data availability statement}
Data sharing not applicable to this article as no data sets were generated or analysed during the current study.

\subsection*{Ethical Approval}
Not applicable.

\subsection*{Competing interests} The authors
read and approved the final manuscript. The authors have no relevant financial or non-financial interests to disclose.


\bigskip
\noindent \textsc{\textsc{anouar bahrouni}} \\
Mathematics Department, Faculty of Sciences, University of Monastir,
5019 Monastir, Tunisia\\
 (Anouar.Bahrouni@fsm.rnu.tn; bahrounianouar@yahoo.fr)

\bigskip
\noindent \textsc{\textsc{hlel missaoui}} \\
Mathematics Department, Faculty of Sciences, University of Monastir,
5019 Monastir, Tunisia\\
 (hlelmissaoui55@gmail.com)

 \bigskip
\noindent \textsc{\textsc{hichem ounaies}} \\
Mathematics Department, Faculty of Sciences, University of Monastir,
5019 Monastir, Tunisia\\
 (hichem.ounaies@fsm.rnu.tn)

\bigskip
\noindent \textsc{\textsc{vicen\c{t}iu d.\,r\u{a}dulescu}} \\
Faculty of Applied Mathematics, AGH University of Krak\'ow, 30-059 Krak\'ow, Poland \&
	Brno University of Technology, Faculty of Electrical Engineering and Communication, Technick\'a 3058/10, Brno
61600, Czech Republic \& Department of Mathematics,
University of Craiova, 200585 Craiova, Romania \& Simion Stoilow Institute of Mathematics of the Romanian Academy, Calea Grivi\c tei 21, 010702 Bucharest, Romania \\
(radulescu@inf.ucv.ro)
\end{document}